
\documentclass[11pt]{article}
\usepackage[margin=1in]{geometry}

\usepackage{mathtools,amssymb,amsthm,xifthen}
\usepackage{amsmath}
\usepackage{graphicx,color}
\usepackage{dsfont}
\usepackage{booktabs}
\usepackage[square,numbers,sort]{natbib}
\usepackage[boxed]{algorithm2e}
\usepackage[hyphens]{url}

\usepackage{hyperref}
\usepackage[capitalize]{cleveref}
\usepackage[notcite,notref,final]{showkeys}
\newtheorem{theorem}{Theorem}
\newtheorem{corollary}{Corollary}
\newtheorem{proposition}{Proposition}
\newtheorem{lemma}{Lemma}
\theoremstyle{definition}
\newtheorem{definition}{Definition}

\newcommand{\cD}{\mathcal{D}}

\newcommand{\RR}{\mathbb{R}}




\newcommand*{\tv}[2]{\mathrm{d_{TV}}(#1, #2)}

\newcommand*{\kl}[3][]{%
\ifthenelse{\isempty{#1}}{\operatorname{D}(#2\,\|\,#3)}%
{\operatorname{D}(#2\,\|\,#3\mid#1)}%
}

\newcommand*{\mi}[3][]{%
\ifthenelse{\isempty{#1}}{\operatorname{I}(#2\,;\,#3)}%
{\operatorname{I}(#2\,;\,#3\mid#1)}%
}

\newcommand*{\ent}[2][]{%
\ifthenelse{\isempty{#1}}{\operatorname{H}(#2)}%
{\operatorname{H}(#2\mid#1)}%
}

\newcommand*{\E}{\mathbb E}

\newcommand*{\ep}{\varepsilon}
\newcommand*{\defeq}{\triangleq}

\newcommand{\inner}[1]{\langle #1 \rangle}

\newcommand*{\mmse}[2]{\operatorname{MMSE}_{#1}(#2)}

\newtheorem{assumption}{Assumption}

\DeclarePairedDelimiter\ceil{\lceil}{\rceil}
\DeclarePairedDelimiter\floor{\lfloor}{\rfloor}

\begin{document}

\title{It was ``all'' for ``nothing'':\\ sharp phase transitions for noiseless discrete channels}



\author{
Jonathan Niles-Weed\thanks{Courant Institute of Mathematical Sciences and Center for Data Science, New York University e-mail: {\tt jnw@cims.nyu.edu}. JNW is supported in part by NSF grant DMS-201529.}
\and
{ Ilias Zadik}\thanks{Center for Data Science, New York University ; e-mail: {\tt zadik@nyu.edu.} IZ is supported by a CDS Moore-Sloan postdoctoral fellowship.}
}

\maketitle

\begin{abstract}%
We establish a phase transition known as the ``all-or-nothing'' phenomenon for noiseless discrete channels.
This class of models includes the Bernoulli group testing model and the planted Gaussian perceptron model.
Previously, the existence of the all-or-nothing phenomenon for such models was only known in a limited range of parameters.
Our work extends the results to all signals with arbitrary sublinear sparsity.

Over the past several years, the all-or-nothing phenomenon has been established in various models as an outcome of two seemingly disjoint results: one positive result establishing the ``all" half of all-or-nothing, and one impossibility result establishing the ``nothing" half. Our main technique in the present work is to show that for noiseless discrete channels, the ``all'' half implies the ``nothing'' half, that is, a proof of ``all'' can be turned into a proof of ``nothing.''
Since the ``all'' half can often be proven by straightforward means---for instance, by the first-moment method---our equivalence gives a powerful and general approach towards establishing the existence of this phenomenon in other contexts.\footnote{A 2-pages extended abstract for this work has been presented at the Conference on Learning Theory (COLT) 2021.}
\end{abstract}

\tableofcontents 
\section{Introduction}
A surprising feature of high-dimensional inference problems is the presence of \emph{phase transitions}, where the behavior of estimator changes abruptly as the parameters of a problem vary.
Often, these transitions help illuminate fundamental limitations of an optimal estimation procedure, by showing, for instance, that a certain inference task is impossible when the noise is too large or the number of samples too few.
There is a large and growing literature on proving rigorously the presence of such transitions and on establishing their implications for learning and inference tasks in a variety of settings.~\citep[see, e.g.,][]{MezMon09} 

A particularly striking phase transition is known as the \emph{all-or-nothing} phenomenon~\citep{gamarnikzadik, ReevesPhenom, Zad19}.
In problems evincing this phenomenon, there is a sharp break: below a critical number of samples, it is impossible to infer \emph{almost any} information about a parameter of interest, but as soon as that critical point is reached, it is possible to infer the parameter almost perfectly.
Such problems exhibit a sharp dichotomy, where either perfect inference is possible or nothing is.

In this work, we develop general tools for proving the all-or-nothing phenomenon for a class of models we call ``noiseless discrete channels."
In such models, we fix a function $g$ and observe identically distributed copies of a pair $(Y, X) \in \mathcal Y \times \RR^{L}$ generated by
\begin{equation*}
Y = g(X, \theta)\,,
\end{equation*}
where $X$ is a random draw from some known distribution on $\RR^L$, and $\theta$ is an unknown parameter to be estimated.
Under the assumption that $|\mathcal Y| < \infty$, we can view $g$ as a discrete channel, parametrized by $\theta$, which maps $\RR^L$ to $\mathcal Y$, and our goal is to ascertain how many samples (i.e., how many uses of this channel) we need to reliably recover $\theta$.

We highlight two special cases of the above model which have seen recent attention:
\begin{itemize}
\item Group testing~\citep{Dor43, survey}: $\theta \in \{0, 1\}^N$ indicates a subset of infected individuals in a population, and $X \in \{0, 1\}^N$ indicates a random subset chosen to be tested as a batch.
We observe $g(X, \theta) = 1( \mathrm{Support}(X) \cap \mathrm{Support}(\theta) \neq \emptyset)$, where for a vector $v \in \mathbb{R}^N$, $\mathrm{Support}(v) \subseteq [N]$ denotes the set of the non-zero coordinates of $v$. 
How many tests do we need to determine which individuals are infected?
\item Planted Gaussian perceptron~\citep{ZdeKrz16}: in this simple ``teacher-student'' setting, $\theta \in \{0, 1\}^N$ represents the weights of a ``teacher'' one-layer neural network, and we observe $g(X, \theta) = 1( \sum_{j=1}^N \theta_j x_j \geq 0)$, where the $x_j$ are i.i.d.~standard Gaussian random variables.
How many samples do we need for a ``student'' to learn the teacher's hidden weights?
\end{itemize}
Both models have recently been studied in the all-or-nothing framework~\citep{ScarlletAll,luneau20}.
However, the range of parameters for which the all-or-nothing phenomenon has been rigorously established in either model is limited.
\cite{ScarlletAll} show that all-or-nothing holds for group testing in the extremely sparse regime when the number of infected individuals is $o(N^\ep)$ for all $\ep > 0$.
Their proof is combinatorial and proceeds by the second-moment method.
\cite{luneau20} give a heuristic derivation of the all-or-nothing phenomenon for the planted Gaussian perceptron based on the replica method from statistical physics, and establish that this phenomenon holds if $\|\theta\|_0:=|\{i \in [N]: \theta_i \not =0 \}|$ is both $\omega(N^{\frac{8}{9}})$ and $o(N)$.

\subsection{Contribution}
We give a simple criterion for the all-or-nothing phenomenon to hold in noiseless discrete channels. For such settings, we measure success in terms of the minimum mean squared error (MMSE) and the signal is assumed to lie on the Euclidean unit sphere. The \textit{``all" property} corresponds to a vanishing MMSE, while the \textit{``nothing" property} corresponds to MMSE being asymptotically equal to one, which is the mean squared error achieved by the trivial zero estimator.
As a corollary of our result, we show that the all-or-nothing phenomenon holds for all relevant sparsity regimes in both the group testing and planted perceptron models, substantially generalizing prior work.

Our key technical contribution is to show that, under suitable conditions, proving the ``all'' condition \emph{immediately} implies that the ``nothing'' condition holds as well.
More specifically, we show that \textcolor{black}{if the mean squared error vanishes} for all $n \geq (1+\epsilon) n^*$ for some critical $n^*$, then for $n \leq (1-\epsilon)n^*$ no recovery is possible.
In other words, for these models, ``all'' implies ``nothing'' in a suitable sense.
Crucially, the ``all'' condition can often be proven directly, by simple means, as it suffices to establish that a specific estimator is successful, via for example a simple ``union bound" or ``first-moment" argument. On the other hand, the ``nothing'' lower bound requires proving the failure of any estimation method, and has typically been proven by using more subtle techniques, such as delicate second moment method arguments (see e.g. \cite{ReevesPhenom} for the regression setting and \cite{ScarlletAll} for the Bernoulli group testing setting).
Our ``all'' implies ``nothing'' result shows that this complication is unnecessary for a class of noiseless discrete channels.


We apply our techniques to both non-adaptive Bernoulli group testing and the planted Gaussian perceptron model. We report the following.
\begin{itemize}
\item For the \textit{Bernoulli group testing model} (BGT), we focus on the case, common in the group testing literature, where there are $k$ infected individuals, with $k=o(N).$ We model the infected individuals as a binary $k$-sparse vector on the unit sphere, and as mentioned above we measure success in terms of the MMSE. In the BGT setting each individual is assumed to participate in any given test in an i.i.d. fashion, and independently with everything, with probability $\frac{\nu}{k},$ for some $\nu=\nu_k$ satisfying $q=(1-\frac{\nu}{k})^k.$ Here $q \in (0,1)$  is a fixed constant, again as customary in the literature of Bernoulli group testing \citep{survey}.  We show as an application of our technique that the all-or-nothing phenomenon holds for the BGT design \textbf{for all $k=o(N)$ and for any $q \leq \frac{1}{2}$} at the critical number of tests $$n^*_q=k \log \frac{N}{k}/h(q),$$where $h(q)$ denotes the (rescaled) binary entropy at $q$ defined in \eqref{bin_ent}. In words, with less than $n_q$ samples the MMSE is not better than ``random guess", while with more than $n_q$ samples it is almost zero. To the best of our knowledge this result was known before only in the case where $k=o(N^{\ep})$ for all $\ep>0$ and $q=\frac{1}{2}$ \citep{ScarlletAll}.

\item For the \textit{Gaussian perceptron model}, we focus on the case where $\theta$ is a a binary $k$-sparse vector on the unit sphere, \textcolor{black}{with $k = o(N)$}. We study a more general class of noiseless Boolean models than the Gaussian perceptron, where $Y_i=1(\inner{X_i,\theta} \in A)$ for some arbitrary Borel $A \subseteq \mathbb{R}$ with (standard) Gaussian mass equal to $\frac{1}{2}$. Equivalently we consider any Boolean function $f: \mathbb{R} \rightarrow \{-1,1\}$ which is balanced under the standard Gaussian measure, i.e. $\E{f(Z)}=0, Z \sim N(0,1),$ and assume $Y_i=f(\inner{X_i,\theta}).$ Notice that the perceptron model corresponds to the case $A=[0,+\infty)$ and $f(t)=2 1(t>0)-1$, but it includes other interesting models such as the symmetric binary perceptron $A=[-u,u]$ with $u$ the median of $|Z|,Z \sim N(0,1)$ which has recently been studied in the statistical physics literature \citep{AubPerZde19}. We apply our technique in this setting to prove a generic result; \textbf{all such models} exhibit the all-or-nothing phenomenon at the same critical sample size $$n^*=k \log_2 \frac{N}{k}.$$ To the best of our knowledge this sharp phase transition was known before only in the case where $A=[0,+\infty)$ and $k$ is $\omega(N^{\frac{8}{9}})$ and $o(N)$ \cite{luneau20}

\end{itemize}

\subsection{Comparison with previous work}

\paragraph{All-or-Nothing} The all-or-nothing phenomenon has been investigated in a variety of models, and with different techniques~\citep{gamarnikzadik,NilZad20,barbier01, ReevesCAMSAP, barbier2020allornothing, luneau20, ScarlletAll, ReevesPhenom}. More specifically, the phenomenon was initially observed in the context of the maximum likelihood estimator for sparse regression in \cite{gamarnikzadik} and was later established in the context of MMSE for sparse regression \citep{ReevesPhenom, ReevesCAMSAP}, sparse (tensor) PCA \citep{barbier01, NilZad20, barbier2020allornothing}, Bernoulli group testing \citep{ScarlletAll} and generalized linear models \citep{luneau20}.

A common theme of these works is that all-or-nothing behavior can arise when the parameter to be recovered is sparse, with sparsity \emph{sublinear} in the dimensions of the problem. 
Though it is expected that this phenomenon should arise for all sublinear scalings, technical difficulties often restrict the range of applicability of rigorous results. In the present work we circumvent this challenge by showing that a version of the ``all" condition suffices to establish the all-or-nothing phenomenon for the whole sublinear regime. As mentioned above, usually the ``all" result is easier to establish than the ``nothing" result. Leveraging this, we are able to establish the all-or-nothing phase transitions throughout the sublinear sparsity regimes of both the Bernoulli group testing and Gaussian perceptron models, where only partial results have been established before \citep{ScarlletAll, luneau20}.

\paragraph{``All" implies ``Nothing"} As mentioned already, our key technical contribution is showing that the ``all" result suffices to establish the all-or-nothing phenomenon. This potentially counterintuitive result relates to a technique used in information theory known as the \textit{area theorem} \cite{measson2008maxwell, kudekar2017reed, reeves:2016a}.
A heuristic explanation of this connection in the regression context appears in \cite[Section 1.1.]{ReevesPhenom}; however, despite this intuition, the authors of \cite{ReevesPhenom} do not proceed by this route.
To the best of our knowledge, our results are the first to rigorously prove that in certain sparse learning settings, the ``all" result indeed implies the all-or-nothing sparse phase transition.

\paragraph{Bernoulli group testing} Now, we comment on our contribution for the BGT model, as compared to the BGT literature. In the Bernoulli group testing model, it is well-known that for all $k=o(N)$ and $q=(1-\frac{\nu}{k})^k$, it is possible to obtain a vanishing MMSE (``all") with access to  $(1+\epsilon)n^*_q=(1+\epsilon) k \log \frac{N}{k}/h(q)$ tests \citep{survey,ScarlletAll}. Furthermore, it is also known that if $q=1/2$ with less than $(1-\epsilon)n^*_{1/2}$ test it is impossible to achieve an ``all" result \citep[Theorem 3]{SODA2016}. To the best of our knowledge, this (weak) negative result of ``all" being impossible is \textit{not known} when $q \not =\frac{1}{2}$ and one has access to fewer than $(1-\epsilon)n^*_q$ tests, \textcolor{black}{though some relevant discussion appears in \citep{SODA2016}.} Finally, as mentioned above, \cite{ScarlletAll} do establish the strong negative ``nothing" result that if $k=o(N^{\delta})$ for all  $\delta>0$ and $q=\frac{1}{2}$ with less $(1-\epsilon)n^*_{1/2}$ it is impossible to achieve a non-trivial MMSE  \citep{ScarlletAll}.

 In the present work, we show as a corollary of our methods that \textbf{for all} $k=o(N)$ and $q \leq \frac{1}{2}$, ``nothing" holds when the number of tests is fewer than $(1-\epsilon)n^*_q$, substantially improving the literature of impossibility results in Bernoulli group testing. While to the best of our knowledge, the appropriate ``all" result needed for our argument to work is not known for any $q < \frac{1}{2}$ it has been established before when $q=\frac{1}{2}$ \citep[see, e.g.][Lemma 1.3.]{iliopoulos2020group}. \textcolor{black}{Finally, it is worth pointing out that some form of non-trivial information can still be extracted from the Bernoulli group testing instance even in the ``nothing'' regime where the MMSE is trivial. For example, \cite{ScarlletAll} showed that for some values of $k$ it is possible even when $n<(1-\epsilon)n^*_q$ to successfully hypothesis test between the Bernoulli group testing model and a ``pure noise'' model where the tests outcomes are random and independent from everything else (see also the more recent work \cite{GT_COLT22} on the same topic).}

\paragraph{Gaussian perceptron model} For the Gaussian perceptron model, to the best of our knowledge the most relevant result is in \cite{luneau20} where the authors prove the all-or-nothing phenomenon at $n^*=k \log_2 \frac{N}{k}$ samples when  $k$ is $\omega(N^{\frac{8}{9}})$ and $o(N)$. While they characterize the  free energy of the model and therefore provide more precise results than we do, their results apply to a restricted sparsity regime. We do not precisely characterize the limiting free energy, but our much simpler argument shows that the all-or-nothing phenomenon holds \textbf{for all} sparsity levels $k=o(N)$.

\section{Main Results}

\subsection{General framework: noiseless discrete channels}\label{sec:framework}
\paragraph{The family of models}
We define a sequence of observational models we study in this work, indexed by $N \in \mathbb{N}$. Assume that an unknown parameter, or ``signal", $\theta \in \mathbb{R}^N$ is drawn from some uniform prior $P_{\Theta}=(P_{\Theta})_{N}$ supported on a discrete subset $\Theta$ of the unit sphere in $\mathbb{R}^N$. We set $|\Theta|=M=M_N$ and make the following \emph{``non-negativity''} assumption on the overlap between two parameters that for any $\theta,\theta' \in \Theta$ it holds $$\inner{\theta,\theta'} \geq 0.$$
 For some distribution $\cD=\cD_N$ supported on $\mathbb{R}^L,$ where $L=L_N$, we assume that for $n=n_N$ i.i.d.~samples $X_i \sim \mathcal{D}_X, i=1,2,\ldots, n$ we observe $(Y_i,X_i), i=1,2,\ldots,n$ where
\begin{align}\label{eq:observ}
Y_i=g(X_i,\theta), i=1,2,\ldots, n.
\end{align} The function $g=g_N: \mathbb{R}^L \times \mathbb{R}^N \rightarrow \mathcal{Y}$ is referred to as the channel. We assume throughout that $\mathcal{Y}$ is finite and of cardinality that remains constant as $N$ grows, e.g.,~$\mathcal{Y}=\{0,1\}$.  We denote by $Y^n$ the $n$-dimensional vector with entries $Y_i, i=1,2,\ldots, n$ and $X^n$ the $n \times L$ matrix with columns the vectors $X_i, i=1,2,\ldots, n$.
We write $\mathrm P = \mathrm P_N$ for the joint law of $(Y^n, X^n, \theta)$.

We are given access to the pair $(Y^n,X^n)$,and our goal is to recover $\theta$.  We measure recovery with $n$ samples in terms of the minimum mean squared error (MMSE),\begin{align}
\mathrm{MMSE}_N(n)= \E \|\theta-\E [\theta|Y^n,X^n] \|^2\,.
\end{align}

%
%
%
%
%

\paragraph{The all-or-nothing phenomenon}
We say that a sequence of models $((P_{\Theta})_N, g_N, \cD_N)$ satisfies the \emph{all-or-nothing phenomenon} with critical sequence of sample sizes $n_c=(n_c)_N$ if 
\begin{equation}\label{all_or_nothing}
\lim_{N \to \infty} \mmse{N} { \floor{\beta n_c}} = \left\{
\begin{array}{ll}
1 & \text{ if $\beta<1$} \\
0 & \text{ if $\beta>1$}\,.
\end{array}
\right.
\end{equation}
This condition expresses a very sharp phase transition: when $\beta > 1$, we can identify the signal nearly perfectly, but when $\beta < 1$, we can do no better than a trivial estimator which always outputs zero.

\paragraph{Assumptions} To establish our result we make throughout the following further assumptions on our models.

Recall that we have assumed that our prior $P_{\Theta}$ is the uniform distribution on some finite subset of cardinality $M=M_N$.
We assume throughout that $M_N \to \infty$ as $N \to \infty$.
We also make the following assumption, which requires that the distribution $ P_{\Theta}$ is sufficiently spread out. \begin{assumption}\label{assum:full_ov}
For $\theta$ and $\theta'$ chosen independently from $P_{\Theta}$ we have
\begin{align}\label{assum:full_ov2}
\lim_{\delta \rightarrow 0^+} \lim_{N \rightarrow +\infty} \frac{\log ( M P_{\Theta}^{\otimes 2}(\inner{ \theta, \theta' } \geq 1-\delta))}{\log M}=0.
\end{align}
Moreover, we assume for \textcolor{black}{$\theta$ and $\theta'$ chosen independently from $P_{\Theta}$} and any $\epsilon>0$,
\begin{align}\label{in_prob}
\lim_N P_{\Theta}^{\otimes 2}(\inner{\theta',\theta} \geq \epsilon)=0.
\end{align} 
\end{assumption}
Assumption \eqref{assum:full_ov2} guarantees that the that for two independent draws from the prior $\theta,\theta'$, the asymptotic probability that $\theta'$ is very near to $\theta$ is dominated by the probability that $\theta = \theta'$.
This condition is the same as the one employed by~\cite{NilZad20} in the analysis of the all-or-nothing phenomenon for Gaussian models.

Assumption~\eqref{in_prob} implies that \textcolor{black}{independent samples from the prior are asymptotically uncorrelated with each other. This condition is natural in the context of the all-or-nothing phenomenon, since if Assumption~\eqref{in_prob} fails to hold, then it is possible to obtain an estimator with non-trivial correlation with the signal by simply drawing a fresh sample from the prior, independent of the observations.}


\textcolor{black}{Assumptions \eqref{assum:full_ov2} and \eqref{in_prob} are easy to verify in a variety of sparse models. For instance, they hold if the rate function
\begin{equation*}
	r(\rho) = - \lim_N \frac{1}{\log M} \log P_{\Theta}^{\otimes 2}(\langle \theta', \theta \rangle \geq \rho) \quad \quad \rho \in [0,1]
\end{equation*}
exists and is a strictly increasing continuous function on $[0, 1]$.
}

We make also assumptions on the probability a $\theta' \in \Theta \setminus \{\theta\}$ is able to fit the observations generated by the signal $\theta$.
\begin{assumption}\label{assum:r_low_ov}
We assume there exists a fixed function $R:[0, 1] \to [0, 1]$, independent of $N$, such that
\begin{equation*}
P_N(g(X, \theta) = g(X, \theta')) = R(\inner{\theta, \theta'}) \quad \forall N \in \mathbb N, \theta, \theta' \in \Theta\,.
\end{equation*}
That is, that the probability that $g(X, \theta)$ and $g(X, \theta')$ agree is a function of $\inner{\theta, \theta'}$ alone.
We assume that $R$ is continuous at $0^+$ and strictly increasing on $[0, 1]$.
\end{assumption}

\subsection{Main result: How ``all" implies ``nothing"}

Notice that since our prior distribution is a uniform distribution over the finite parameter space $\Theta$ and our observation model is noiseless, the posterior distribution of $\theta$ given $Y^n,X^n$ satisfies that for any $\theta'$,
\begin{align*}
P(\theta'|Y^n,X^n) = \frac{ P(\theta')P(Y^n|X^n,\theta')}{P(Y^n|X^n)} \propto  P(Y^n|X^n,\theta')=\prod_{i=1}^n 1(Y_i=g(X_i,\theta')).
\end{align*} In words, the posterior distribution is simply the uniform measure over the vectors $\theta' \in \Theta$ satisfying 
\begin{align}\label{eq:observ2}
Y_i=g(X_i,\theta'), i=1,2,\ldots, n.
\end{align} As an easy corollary, the distance of the posterior mean from the ground truth vector, or equivalently the $\mmse{N}{n}$, can be naturally related to the behavior of the following ``counting" random variables. 
\begin{definition}\label{dfn:ZN}
 For any $N \in \mathbb{N}$ and $\delta \in [0,2]$, let $Z_{N,\delta}=Z_{N,\delta}(Y^n,X^n)$ be the random variable which is equal to the number of solutions $\theta' \in \Theta$ of equations \eqref{eq:observ2} where $\|\theta-\theta'\|^2 \geq \delta.$
\end{definition}
Using the definition above, the following simple \textcolor{black}{proposition} holds.
\begin{proposition}\label{prop:postCounting}
For $\theta'$ drawn from the posterior distribution of $\theta$ given $Y^n,X^n$ it holds almost surely that
\begin{align}\label{eq:posterior}
P(\|\theta-\theta'\|^2 \geq \delta|Y^n,X^n)=\frac{Z_{N,\delta}}{Z_{N,0}}.
\end{align}Hence,
\begin{align}\label{eq:posteriorExp}
\mmse{N}{n} = \frac 12 \E \int_{\delta=0}^{2}\frac{Z_{N,\delta}}{Z_{N,0}} d\delta.
\end{align} Furthermore, the property that
\begin{align}\label{assum:no_sol_new}
\text{ for all } \delta \in (0,2], \lim_N \E \frac{Z_{N,\delta}}{Z_{N,0}}=0 
\end{align} is equivalent with the ``all" property
\begin{align}\label{mmse0}
\lim_N \mmse{N}{n} =0.
\end{align} Finally the property \begin{align}\label{assum:no_sol}
\text{ for all } \delta \in (0,2], \lim_N P(Z_{N,\delta}>0)=0,
\end{align}implies the ``all" condition \eqref{mmse0}.
\end{proposition}

Proposition \ref{prop:postCounting} offers a clean combinatorial way of establishing the vanishing MMSE (``all") in our context; one needs to prove the (relative) absence of solutions of \eqref{eq:observ2} which are at a constant distance from $\theta$, establishing for example \eqref{assum:no_sol}. A clear benefit of such an approach is that one could possibly establish such a result by trying a (possibly conditional) union bound---or ``first moment"---argument. We investigate further the power of establishing the ``all" result in what follows.

We consider the following \textit{critical sample size}, 
\begin{equation}\label{eq:nstar}
n^* = (n^*)_N =\floor*{\frac{H(\theta)}{H(Y)}},
\end{equation}where by $H(\cdot)$ we refer to the Shannon entropy of a discrete random variable and $Y=g(X,\theta)$ for a sample of $X \sim \cD_N$ and $\theta \sim (P_{\Theta})_N$. The significance of the sample size $n^*$ is highlighted in the following proposition which establishes that the ``all" condition \eqref{mmse0} can only hold if the number of samples is at least $n^*$.
\begin{proposition}\label{prop:lb}
Suppose that Assumption \ref{assum:full_ov} is true. If the ``all" condition \eqref{mmse0} holds for some sequence of sample sizes $n=n_N$, then
\begin{align*}
\liminf_N \frac{n}{n^*} \geq 1.
\end{align*}

\end{proposition}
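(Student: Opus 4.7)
The plan is to translate the counting condition \eqref{assum:no_sol} into an information-theoretic statement about the posterior entropy, and then combine it with a standard mutual information inequality. Concretely, since Proposition~\ref{prop:postCounting} shows that the posterior of $\theta$ given $(Y^n, X^n)$ is uniform on the set of consistent $\theta' \in \Theta$, one has
\[
H(\theta \mid Y^n, X^n) = \E[\log Z_{N,0}].
\]
My first step is to show that, under \eqref{assum:no_sol} together with Assumption~\ref{assum:full_ov}, this quantity is $o(\log M)$, i.e. $o(H(\theta))$.

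To bound $\E[\log Z_{N,0}]$, I fix $\delta>0$ small and split $Z_{N,0}$ into ``far'' and ``near'' contributions. The far contribution $Z_{N,\delta}$ vanishes with high probability by \eqref{assum:no_sol}. For the near contribution, I use that on the unit sphere $\|\theta-\theta'\|^2<\delta$ is equivalent to $\langle \theta,\theta'\rangle > 1-\delta/2$, so letting $N_\eta(\theta)=|\{\theta'\in\Theta : \langle \theta,\theta'\rangle \geq 1-\eta\}|$, the event $\{Z_{N,\delta}=0\}$ forces $Z_{N,0}\leq N_{\delta/2}(\theta)$. Using the trivial bound $Z_{N,0}\leq M$ on the complement,
\[
\E[\log Z_{N,0}] \;\leq\; \E[\log N_{\delta/2}(\theta)] + (\log M)\,P(Z_{N,\delta}>0).
\]
By Jensen's inequality, $\E[\log N_{\delta/2}(\theta)]\leq \log \E[N_{\delta/2}(\theta)] = \log\bigl(M\,P_\Theta^{\otimes 2}(\langle \theta,\theta'\rangle \geq 1-\delta/2)\bigr)$, which is $o(\log M)$ as $\delta\to 0^+$ and $N\to\infty$ thanks to \eqref{assum:full_ov2}. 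Combining with the vanishing probability from \eqref{assum:no_sol} delivers $\E[\log Z_{N,0}] = o(\log M)$.

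For the second step, I plug this into $H(\theta) = I(\theta; Y^n, X^n) + H(\theta\mid Y^n,X^n)$ to obtain $(1-o(1))H(\theta) \leq I(\theta; Y^n, X^n)$. Since $\theta$ is independent of $X^n$ and the channel is noiseless (so $H(Y^n\mid X^n,\theta)=0$), we get $I(\theta; Y^n, X^n) = H(Y^n\mid X^n) \leq \sum_i H(Y_i\mid X_i) \leq n H(Y)$ by subadditivity and dropping the conditioning. Hence $n \geq (1-o(1))\,H(\theta)/H(Y)$. Finally, because $H(\theta)=\log M\to\infty$ while $H(Y)\leq \log|\mathcal Y|$ is bounded, the ratio $H(\theta)/H(Y)$ tends to infinity, so $n^* = \lfloor H(\theta)/H(Y)\rfloor$ satisfies $H(\theta)/H(Y) = (1+o(1))\,n^*$, yielding $\liminf_N n/n^* \geq 1$.

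The main obstacle is the first step: turning the pointwise-in-$\delta$ statement \eqref{assum:no_sol} into a uniform control of $\E[\log Z_{N,0}]$. The only delicate point is choosing $\delta$ small enough (and then $N$ large enough) to simultaneously exploit the spread-out assumption \eqref{assum:full_ov2} and the vanishing of $P(Z_{N,\delta}>0)$. Assumption \eqref{in_prob} is not needed for this argument; the entire proof rests on \eqref{assum:full_ov2}, Jensen's inequality, and the elementary mutual-information bound.
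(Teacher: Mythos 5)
Your proof is correct and follows essentially the same route as the paper's. Step 1 reproduces the proof of Proposition~\ref{prop:ent_post} (split $Z_{N,0}$ into near/far parts, Jensen, then Assumptions~\eqref{assum:full_ov2} and~\eqref{assum:no_sol}), and Step 2 is precisely the combination of Proposition~\ref{prop:key_identity} with the nonnegativity of $\kl{P_n}{Q_n}$, which you simply express in the equivalent elementary form $I(\theta;Y^n\mid X^n)=H(Y^n\mid X^n)\le nH(Y)$ rather than routing through the null model $Q_n$.
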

While we defer the proof of Proposition \ref{prop:lb} to the Appendix \ref{sec:remain}, we highlight some aspects of it which will be important in what follows. The key identity behind the proof of the proposition is it always holds that
\begin{align}
H(\theta)-H(\theta|Y^n,X^n) &=n H(Y)-\kl{P(Y^n,X^n)}{Q (Y^n,X^n)} \label{eq:key_id1_main}\\
& \leq n H(Y),\label{ineq:key_id1}
\end{align}where 1) $D$ stands for the Kullback-Leibler (KL) divergence (see e.g. \cite[Section 6]{PWlt}), 2) $P(Y^n,X^n)$ stands for the joint law of $(Y^n, X^n)$ generated by the observation model \eqref{eq:observ} and 3) $Q(Y^n,X^n)$ stands for the law of a ``null" model where the columns of $X^n$ are  i.i.d. samples drawn from $\mathcal{D}$ and the entries of $Y^n$ are drawn in an i.i.d. fashion from the distribution of $Y=g(X,\theta)$ \textit{but independently from $X^n$}. \textcolor{black}{As a result, the law of a single observation $(X_i, Y_i)$ is the same under $P$ and $Q$,} but \textcolor{black}{$P$ and $Q$} are distinct as joint distributions, as for example the latter has \textit{no hidden signal}. The identity \eqref{eq:key_id1_main} follows from algebraic manipulations which can be found in Appendix~\ref{app:conv}. The inequality in \eqref{ineq:key_id1} is implied by the non-negativity of the KL divergence.

The proof of the proposition is based on the fact that the ``all" condition \eqref{mmse0} implies that the entropy of the posterior is of smaller order of magnitude than the entropy of the prior (see Proposition \ref{prop:ent_post}). This property allows us to conclude that the left hand side of \eqref{ineq:key_id1} is $(1-o(1))H(\theta)$ which concludes the proof.

Now we present the main technical result of the present work. We establish that if Proposition \ref{prop:lb} is tight, that is if \eqref{mmse0} can be proven to be true when $n \geq (1+\epsilon)n^*$ for arbitrary $\epsilon>0$, then \eqref{mmse0} is a sufficient to establish that the \textit{the all-or-nothing phenomenon}  holds at sample size $n^*$ as well.

\begin{theorem}[``all" implies ``nothing"]\label{thm:main}
Suppose that Assumptions \ref{assum:full_ov}, \ref{assum:r_low_ov} are true. Assume that if $n \geq (1+\epsilon)n^*$, for some arbitrary fixed $ \epsilon>0,$ then the ``all" condition \eqref{mmse0} holds.  Then if $n \leq (1-\epsilon) n^*$ for arbitrary fixed $\epsilon \in (0,1),$ it holds
\begin{align}\label{mmse1}
\lim_N \mmse{N}{n} =1.
\end{align} In particular, the all-or-nothing phenomenon \eqref{all_or_nothing} holds at critical samples sizes $n_c=n^*$.
\end{theorem}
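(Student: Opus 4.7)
My plan is to prove Theorem~\ref{thm:main} by combining the hypothesis with the KL-divergence identity \eqref{eq:key_id1_main} in four chained steps. First, for an arbitrary fixed $\epsilon>0$, I apply the argument behind Proposition~\ref{prop:lb}: on the asymptotic-probability-one event $\{Z_{n^+,\delta}=0\}$ provided by the ``all'' hypothesis at $n^+:=\lfloor(1+\epsilon)n^*\rfloor$, the set $S_{n^+}$ is contained in $B_\delta(\theta)\cap\Theta$, whose cardinality by Assumption~\ref{assum:full_ov} averages to $M^{o_\delta(1)}$ as $\delta\to 0$. This yields $H(\theta|Y^{n^+},X^{n^+})=\E[\log|S_{n^+}|]=o(\log M)$; plugging into \eqref{eq:key_id1_main} gives $D(P_{n^+}\|Q_{n^+})=(\epsilon+o(1))\log M$. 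By the monotonicity of KL-divergence under marginalization (dropping the last $n^+-n^-$ coordinates can only decrease divergence), $D(P_{n^-}\|Q_{n^-})\le D(P_{n^+}\|Q_{n^+})$. Since $\epsilon>0$ is arbitrary in the hypothesis, we conclude $D(P_{n^-}\|Q_{n^-})/\log M\to 0$, and feeding this back into \eqref{eq:key_id1_main} produces $H(\theta|Y^{n^-},X^{n^-})=(\epsilon_0+o(1))\log M$, equivalently $\E[\log|S_{n^-}|]=(\epsilon_0+o(1))\log M$, where $\epsilon_0$ is the fixed parameter with $n^-=(1-\epsilon_0)n^*$. Morally, the posterior support has typical size $M^{\epsilon_0+o(1)}$.

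Next I translate this entropic picture into MMSE $\to 1$ via Proposition~\ref{prop:postCounting}: it suffices to show $\E[Z_{N,[0,\delta)}/Z_{N,0}]\to 0$ for every $\delta>0$, where $Z_{N,[0,\delta)}=|S_{n^-}\cap B_\delta(\theta)|$. The numerator is deterministically bounded by $|B_\delta(\theta)\cap\Theta|=M^{o_\delta(1)}$ for typical $\theta$ by Assumption~\ref{assum:full_ov}; the denominator $|S_{n^-}|$ needs a high-probability lower bound $|S_{n^-}|\ge M^{\epsilon_0/2}$. To prove this I combine the expected-log bound above with the first-moment decomposition $\E[|S_{n^-}|\mid\theta]=\sum_{\theta'}R(\langle\theta,\theta'\rangle)^{n^-}$: by Assumption~\ref{assum:r_low_ov} (continuity and strict monotonicity of $R$) together with Assumption~\ref{assum:full_ov}, this sum is dominated by the generic low-overlap $\theta'$'s, whose contribution concentrates around $M^{\epsilon_0+o(1)}$. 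The target ratio then vanishes like $M^{o_\delta(1)-\epsilon_0/2}$, completing the argument.

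The main obstacle, in my view, is the last step: boosting the expected-log control on $|S_{n^-}|$ to a high-probability lower bound. The entropic bound alone gives only a constant-probability statement via reverse Markov, which would yield merely $\mathrm{MMSE}_N(n^-)\ge\epsilon_0/2$ rather than the full conclusion $\to 1$. The key structural input for the concentration is Assumption~\ref{assum:r_low_ov}, which lets one write the first moment of $|S_{n^-}|$ as an explicit sum of $R$-exponentials, combined with Assumption~\ref{assum:full_ov}, which ensures the bulk of this sum comes from the generic low-overlap contribution rather than from rare near-$\theta$ points.
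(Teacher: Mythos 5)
Your first three steps are correct, and they parallel ingredients the paper actually uses: Proposition~\ref{prop:ent_post} together with the identity \eqref{eq:key_id1_main} gives $\kl{P_{(1+\ep)n^*}}{Q_{(1+\ep)n^*}} = (\ep+o(1))\log M$; monotonicity of $n\mapsto\kl{P_n}{Q_n}$ (part (a) of Proposition~\ref{prop:properties_D}) then yields $\kl{P_{n^-}}{Q_{n^-}} = o(\log M)$ for $n^- \leq (1-\ep_0)n^*$; feeding this back gives $H(\theta\mid Y^{n^-},X^{n^-}) = \E\log Z_{N,0} = (\ep_0+o(1))\log M$. The gap is exactly where you flag it, and your proposed patch does not close it. The identity $\E[Z_{N,0}\mid\theta]=\sum_{\theta'\in\Theta} R(\inner{\theta,\theta'})^{n^-}$ is true, but a first moment of a nonnegative counting variable controls only its upper tail, never its lower tail; to get a whp lower bound $Z_{N,0}\geq M^{\ep_0/2}$ you would need a Paley--Zygmund or otherwise genuine second-moment argument, and Assumptions~\ref{assum:full_ov} and~\ref{assum:r_low_ov} provide no such variance control. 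This is precisely the ``delicate second-moment'' machinery the theorem is designed to sidestep, so the route you are on cannot be completed with the stated hypotheses.

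The paper takes a different turn at this point. Rather than lower-bounding the posterior support size, it studies the \emph{increments} of the KL sequence. With $D_N(\beta)$ the piecewise-linear interpolation of $\kl{P_n}{Q_n}/H(\theta)$ in $\beta=n/n^*$, your step 1--3 conclusion says $D_N(1)\to 0$, and since $D_N$ is nonnegative, increasing and convex (Proposition~\ref{prop:properties_D}(a)) this forces $D_N\to 0$ uniformly on $[0,1]$; convexity propagates this to the left derivatives \citep[Proposition I.4.3.4]{HirLem93}, which Proposition~\ref{prop:properties_D}(c) identifies as $1 - H(Y_{n+1}\mid Y^n,X^{n+1})/H(Y)+o(1)$. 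Hence $H(Y_{n+1}\mid Y^n,X^{n+1})/H(Y)\to 1$ for all $n\leq(1-\ep)n^*$---the ``prediction nothing'' statement of Lemma~\ref{lem:implic_post}(2). Lemma~\ref{lem:nothing} then converts this into $\mmse{N}{n}\to 1$ with no support lower bound at all: the prediction-nothing statement is a single-coordinate KL bound $\kl{P_{n+1}}{\tilde P_{n+1}}=o(1)$, where $\tilde P_{n+1}$ decouples $(Y_{n+1},X_{n+1})$ from $(Y^n,X^n)$; Pinsker gives $o(1)$ total variation; under $\tilde P$ a posterior draw $\theta'$ faces a fresh, independent $\theta''$, so Assumptions~\ref{assum:r_low_ov} (continuity at $0^+$ and strict monotonicity of $R$) and~\ref{assum:full_ov} (vanishing overlap of independent draws) force $\E R(\inner{\theta,\theta'})\to R(0)$, hence $\inner{\theta,\theta'}\to 0$ in probability, and the Nishimori identity $\mmse{N}{n}=1-\E\inner{\theta,\theta'}$ finishes. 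The essential idea you are missing is to replace the quantitative support estimate (which needs concentration) by this qualitative decoupling/TV argument, which only needs the $o(1)$ bound on a \emph{single} KL increment---and that is exactly what the convexity of the interpolated KL sequence delivers for free.
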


We provide here some intuition behind such a potentially surprising implication. Notice that if \eqref{mmse0} holds at sample sizes $(1+\epsilon)n^*$ for arbitrary fixed $\epsilon>0$, then from the sketch of the proof of Proposition \ref{prop:lb} the inequality  \eqref{ineq:key_id1} needs to hold (approximately) with equality. In fact one can show that at $n=n^*$, it must necessarily hold that
\begin{align}\label{kl_cond}
\lim_N   \frac{\kl{P(Y^{n^*},X^{n^*})}{Q (Y^{n^*},X^{n^*})}}{H(\theta)}=0.
\end{align} At an intuitive level, \eqref{kl_cond}  seems already a significant step towards what we desire to prove. Indeed, \eqref{kl_cond} suggests that $(Y^n, X^n)$ drawn from our model $P$ are close in distribution to the samples $(Y^n,X^n)$ drawn from the null model $Q$. This strongly suggests that outperforming the random guess in mean-squared error should be impossible.

While we think that this argument hints at the right direction for proving the ``nothing" property, we do not know a complete proof along these lines. The reason is that one cannot conclude that $P$ and $Q$ are ``sufficiently close", e.g., in the total variation sense, to argue the above. The reason is that the KL distance in \eqref{kl_cond} vanishes only after rescaling by the factor $H(\theta)=\log M_N \rightarrow +\infty$. For this reason,  \eqref{kl_cond}  does not imply any nontrivial bound for the total variation distance between $P,Q$.  Notably though, such an obstacle has already been tackled in the literature of ``nothing" results in the context of sparse tensor PCA~\citep{BanksIT, NilZad20}. In these cases the ``nothing" result can be established by the use of the I-MMSE formula combined with weak detection lower bound such as \eqref{kl_cond}. The I-MMSE formula is an identity for Gaussian channels between the derivative (with respect to the continuous \textit{signal to noise ratio (SNR)} ) of the corresponding KL divergence and the MMSE for this value of SNR. We are not aware of any such  formula for the noiseless discrete models considered in this work. Nevertheless, inspired by the I-MMSE connection,  we study the discrete derivative of the KL divergence for noiseless models. Specifically we prove a result of potentially independent interest, that a vanishing \textit{discrete derivative} (with respect to \textit{the sample size} $n$) of the sequence $ \kl{P(Y^{n},X^{n})}{Q (Y^{n},X^{n})}/H(\theta), n \in \mathbb{N}$, at $n \leq n^*$, implies indeed a trivial MMSE at $n < n^*$.  We conclude then the result by using classical real analysis results, to show that the convexity of the vanishing sequence for $n \leq n^*$, implies its the  discrete derivative of the sequence is also vanishing for $n \leq n^*$.

\subsection{The case of Boolean channels: a simple condition}
One can naturally ask whether for various models of interest there exist a simple sufficient condition which can establish the positive result \eqref{assum:no_sol} at $n^*$ (e.g. by a union bound argument) and therefore prove the all-or-nothing phenomenon.  In this subsection, we provide such a simple sufficient condition for the subclass of \textit{Boolean (or 1-bit) noiseless models}, i.e. when $\mathcal{Y}=\{0,1\}$. Perhaps not surprisingly, our result follows from an appropriate ``union-bound" or ``first-moment" argument.  In the next section we apply our condition to various such models. 

Notice that in these Boolean binary settings the critical sample size simplifies to $n^*=\floor{\frac{H(\theta)}{h(p)}}$ where $p=P(g(X,\theta)=1)$ and $h$ is the binary entropy 
\begin{align}\label{bin_ent}
h(t)=-t \log t -(1-t)\log (1-t), t\in (0,1),
\end{align} where $\log$ is, as always in this work, with base $e$.

To proceed, we need some additional definitions. The first is about the two possible outcomes of the channel, and it extends Assumption \ref{assum:r_low_ov} to further properties on the probability of a fixed $\theta' \in \Theta \setminus \{\theta\}$ satisfying \eqref{eq:observ2}.
\begin{assumption}\label{assum:r_new} There exist fixed functions $R_{1}: [0,1] \rightarrow [0,1] ,R_{0}:  [0,1] \rightarrow [0,1] $, independent of $N,$ such that
for all $N \in \mathbb{N}$ and $\theta, \theta' \in \Theta$ it holds
\begin{align}
&R_1({\inner{\theta,\theta'}}) = P(g(X, \theta) = g(X, \theta')=1)  \label{dfn:rho0},\\
& R_0({\inner{\theta,\theta'}}) = P(g(X, \theta) = g(X, \theta')=0) \label{dfn:rho1}.
\end{align} For all $\rho \in [0,1]$, $R(\rho)=R_0({\rho}) +R_1({\rho}),$ where $R(\rho)$ is as in Assumption~\ref{assum:r_low_ov}.
We assume that both $R_0$ and $R_1$ are increasing on $[0,1]$.
\end{assumption}
The second is about the distribution of the overlap between two independent copies of the prior distribution. Again we borrow the definition from~\cite{NilZad20}.

\begin{definition}\label{dfn:ov_rate}
Given a non-decreasing continuous function $r: [-1, 1] \to \RR_{\geq 0}$, we say $\{P_\Theta\}$ admits an \emph{overlap rate function} $r$, \textcolor{black}{if for all $\rho \in [0,1]$ it holds} 
\begin{equation*}
\limsup_{N } \frac{1}{\log M_N} \log \mathrm P_\Theta^{\otimes 2}[\inner{ \theta',\theta} \geq \rho] \leq - r(\rho)\,,
\end{equation*}
where $\theta$ and $\theta'$ are independent draws from $ P_\Theta$.
\end{definition}

We state our result.

\begin{corollary}\label{cor:main}
Let $\mathcal{Y}=\{0,1\}$ and let $p=P(g(X,\theta)=1) \in (0,1)$ be constant. Suppose that Assumptions \ref{assum:full_ov}, \ref{assum:r_low_ov} and \ref{assum:r_new} are true. If $\{P_\Theta\}$ admits an overlap rate $r(\rho)$ satisfying
\begin{align}\label{cond:main}
r(\rho) \geq \frac{1}{h(p)} \left(p \log \frac{R_1({\rho})}{p^2}+(1-p) \log \frac{R_0({\rho})}{(1-p)^2} \right) \quad \forall \rho \in [0, 1],
\end{align}then the all-or-nothing phenomenon holds at $n^*=\floor{\frac{\log M}{h(p)}}.$

\end{corollary}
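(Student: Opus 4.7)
The plan is to apply Theorem~\ref{thm:main} and reduce the corollary to verifying the \emph{all} hypothesis~\eqref{assum:no_sol} at the threshold $n = (1+\epsilon)n^* = (1+\epsilon)\lfloor \log M/h(p)\rfloor$. That is, for each $\delta \in (0,2]$ I must show $P(Z_{N,\delta} > 0) \to 0$. A naive first-moment bound $\E[Z_{N,\delta}] = M\cdot \E_{\theta,\theta' \sim P_\Theta^{\otimes 2}}[\mathbf 1(\|\theta-\theta'\|^2\geq\delta)\,R(\inner{\theta,\theta'})^n]$ is too weak: its exponent involves $\log R$ rather than the refined combination $p\log R_1 + (1-p)\log R_0$ appearing in~\eqref{cond:main}. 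I will instead carry out a \emph{conditional} first-moment computation, restricted to the high-probability event $E = \{|N_1 - np| \leq n^{2/3}\}$, where $N_1 = |\{i : Y_i = 1\}| \sim \mathrm{Bin}(n, p)$; by Chernoff, $P(E^c) \to 0$.

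The refined moment bound rests on the identity
\[
P\bigl(g(X_i,\theta') = Y_i\;\forall i\,\bigm|\,Y^n\bigr) = (R_1(\rho)/p)^{N_1}(R_0(\rho)/(1-p))^{N_0},
\]
valid for any fixed $\theta'$ with $\rho = \inner{\theta,\theta'}$, which follows from the i.i.d.\ structure of the pairs $(X_i, Y_i)$ together with Assumption~\ref{assum:r_new}. Averaging over $N_1$ restricted to $E$, counting typical $Y^n$ via Stirling, and taking expectation over $(\theta, \theta') \sim P_\Theta^{\otimes 2}$ yields
\[
\E[Z_{N,\delta}\,\mathbf 1_E] \leq M \cdot e^{o(n)} \cdot \E_{\theta,\theta'}\!\bigl[\mathbf 1(\|\theta-\theta'\|^2\geq\delta)\, e^{n(h(p) + p\log R_1(\rho) + (1-p)\log R_0(\rho))}\bigr].
\]
Since the exponent depends only on $\rho$ and is monotone increasing in $\rho$ by Assumption~\ref{assum:r_new}, discretizing $[0, 1-\delta/2]$ into finitely many subintervals and invoking Definition~\ref{dfn:ov_rate} produces
\[
\frac{\log \E[Z_{N,\delta}\mathbf 1_E]}{\log M} \leq \sup_{\rho \in [0, 1-\delta/2]}\!\Bigl[1 + \tfrac{1+\epsilon}{h(p)}\bigl(h(p) + p\log R_1(\rho) + (1-p)\log R_0(\rho)\bigr) - r(\rho)\Bigr] + o(1).
\]

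Substituting the hypothesis~\eqref{cond:main} and simplifying reduces the bracketed expression to $\tfrac{\epsilon}{h(p)}\bigl[p\log(R_1(\rho)/p) + (1-p)\log(R_0(\rho)/(1-p))\bigr]$, which is strictly negative for every $\rho < 1$ since strict monotonicity of $R_1$ forces $R_1(\rho) < R_1(1) = p$ (and similarly $R_0(\rho) < 1-p$). The main technical subtlety, and the place where strict monotonicity is crucial, is converting this pointwise negativity into a \emph{uniform} strict bound over the compact range $[0, 1-\delta/2]$; this is dispatched by the observation that the bracketed expression is itself monotone increasing in $\rho$, so its supremum is attained at $\rho = 1-\delta/2$, producing a gap $-\gamma_\delta < 0$ independent of $N$. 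Thus $\E[Z_{N,\delta}\mathbf 1_E] \leq M^{-\gamma_\delta + o(1)} \to 0$, and Markov's inequality combined with $P(E^c) \to 0$ gives $P(Z_{N,\delta} > 0) \to 0$. Invoking Theorem~\ref{thm:main} then yields the all-or-nothing phenomenon at $n^*$.
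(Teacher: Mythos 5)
Your proposal is correct and follows essentially the same route as the paper: condition on the typical event for $N_1$, compute a refined first moment using the conditional probabilities $R_1(\rho)/p$ and $R_0(\rho)/(1-p)$, discretize the overlap and apply the overlap rate function, and use~\eqref{cond:main} together with monotonicity to obtain a uniform negative exponent. One small imprecision: you invoke ``strict monotonicity of $R_1$'' to get $R_1(\rho)<R_1(1)=p$, but Assumption~\ref{assum:r_new} only gives that $R_0,R_1$ are (non-strictly) increasing; the strictness available is for $R=R_0+R_1$ (Assumption~\ref{assum:r_low_ov}), which together with monotonicity of $R_0,R_1$ individually still forces at least one of $R_1(\rho)<p$, $R_0(\rho)<1-p$ and hence $G(\rho,p)<0$ for $\rho<1$, as the paper carefully argues.
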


\section{Applications}\label{sec:apps}

In this subsection we use our results, and specifically Corollary \ref{cor:main}, to establish the all-or-nothing phenomenon for various \textit{sparse Boolean} models of interest.
\subsection{Application 1: Nonadaptive Bernoulli group testing}

 We start with nonadaptive Bernoulli group testing. In this context, we fix some parameter $k=k_N \in \mathbb{N}$ with $k \rightarrow +\infty$ and $k=o(N),$ which corresponds to $k$ infected individuals out of a population of cardinality $N$. 
We also fix a constant $q \in (0, 1/2]$, which controls the size of each group which is getting tested. 
 The goal is to perform nonadaptive tests for a disease on different ``groups" or subsets of the $N$ individuals at a time. The logic is that by doing so we may be able to use fewer tests, say, from testing each individual separately, and still recover the infected individuals. Notice that such a Bernoulli group testing model is characterized by the two parameters $q$ and $k=k_N$.

\paragraph{The Model} We assume a uniform prior $P_{\Theta}$, which following our notation we encode as the uniform measure on the $k$-sparse binary vectors on the sphere in $N$-dimensions, i.e. $$\Theta=\{\theta \in \{0,\frac{1}{\sqrt{k}}\}^N: \|\theta\|_0=k\},$$where there is a natural correspondence between the identities of the infected individuals and the support of the vectors $\theta \in \Theta$.
For each $k$, we define $\nu = \nu_k$ to be the unique positive number satisfying
\begin{equation*}
\left(1 - \frac \nu k\right)^k = q\,.
\end{equation*}
The group of individuals being tested is modeled by the binary vector $X \in \{0,1\}^N$, with $\mathcal{D}=\mathrm{Bernoulli}(\frac{\nu}{k})^{\otimes N}.$ In words, we choose whether in individual participates at any given test independently from everything and with probability $\nu/k$, with the parameter $\nu$ chosen so that the probability that each group contains no infected individuals is exactly $q$.

We model the channel by the step function at $\frac{1}{\sqrt{k}}$, i.e.
$Y=g(X,\theta)=1(\inner{X_i,\theta} \geq \frac{1}{\sqrt{k}}),$ which simply outputs the information of whether at least one of the individuals in the selected group is infected (which is equivalent to $\sqrt{k}\inner{X_i,\theta} =|\mathrm{Support}(X_i) \cap \mathrm{Support}(\theta)| \geq 1$) or not. The sample size $n$ corresponds to the number of tests conducted.

Corollary \ref{cor:main} when applied to this context establishes the following result.
\begin{theorem}\label{thm:BGT}
Let $q \in (0,\frac{1}{2}]$ be a constant. Suppose $k=o(N)$ and $\nu_k$ satisfies $(1-\frac{\nu_k}{k})^k=q$. Then the non-adaptive Bernoulli group testing model satisfies the all-or-nothing phenomenon at number of tests $n^*=\floor*{\frac{\log \binom{N}{k}}{h(q)}} = (1 +o(1))\frac{k \log \frac{N}{k}}{h(q)}.$

\end{theorem}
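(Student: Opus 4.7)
The plan is to deduce Theorem \ref{thm:BGT} from Corollary \ref{cor:main}, so I first need to identify the functions $p, R_0, R_1, r$ and verify Assumptions \ref{assum:full_ov}, \ref{assum:r_low_ov}, \ref{assum:r_new} for this specific channel. Since $X \sim \mathrm{Bernoulli}(\nu/k)^{\otimes N}$ and $(1-\nu/k)^k = q$, the event $\{g(X,\theta) = g(X,\theta') = 0\}$ coincides with $\{\mathrm{Support}(X) \cap (S \cup S') = \emptyset\}$, where $S, S'$ are the supports of $\theta, \theta'$. Using $|S \cup S'| = (2-\rho)k$ for $\rho = \inner{\theta, \theta'} = |S \cap S'|/k$ and the inclusion-exclusion formula, this immediately yields
\begin{align*}
R_0(\rho) = q^{2-\rho}, \qquad R_1(\rho) = 1 - 2q + q^{2-\rho},
\end{align*}
both strictly increasing on $[0,1]$, together with $p = P(g(X,\theta)=1) = 1 - q$ and hence $h(p) = h(q)$.

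Next I will identify the overlap rate function. A direct computation using
\begin{align*}
P_\Theta^{\otimes 2}\bigl(\inner{\theta,\theta'} = m/k\bigr) = \binom{k}{m}\binom{N-k}{k-m}\Big/\binom{N}{k},
\end{align*}
together with the asymptotics $\log\binom{N-k}{(1-\rho)k} = (1-\rho)k\log(N/k) + O(k)$ and $\log M = k\log(N/k) + O(k)$ valid under $k = o(N)$, yields $\log P_\Theta^{\otimes 2}(\inner{\theta,\theta'} \geq \rho) = -\rho k \log(N/k) + O(k)$, so Definition \ref{dfn:ov_rate} is met with $r(\rho) = \rho$ and Assumption \ref{assum:full_ov} follows as a byproduct. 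Substituting these quantities into the condition \eqref{cond:main} and expanding $h(q) = -q\log q - (1-q)\log(1-q)$, the $-q\rho\log q$ terms cancel on both sides, and the remaining inequality collapses to the purely analytic statement
\begin{align*}
(1-q)^{2-\rho} \geq 1 - 2q + q^{2-\rho} \qquad \forall \rho \in [0,1].
\end{align*}

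The main obstacle is proving this elementary inequality, and this is the only point where the hypothesis $q \leq 1/2$ is used. Setting $s = 2 - \rho \in [1,2]$ and $\varphi(s) := (1-q)^s - q^s$, the inequality becomes $\varphi(s) \geq \varphi(1) = \varphi(2) = 1 - 2q$ on $[1,2]$. I plan to prove this by showing that $\varphi$ is concave on $[1,2]$, from which the chord inequality applied to the equal endpoint values immediately gives the claim. Since
\begin{align*}
\varphi''(s) = (1-q)^s (\log(1-q))^2 - q^s (\log q)^2,
\end{align*}
concavity on $[1,2]$ amounts to showing $((1-q)/q)^s (\log(1-q))^2 \leq (\log q)^2$. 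Bounding $((1-q)/q)^s \leq ((1-q)/q)^2$ for $s \leq 2$ (valid since $(1-q)/q \geq 1$ when $q \leq 1/2$) reduces the claim to the pointwise comparison $(1-q)|\log(1-q)| \leq q|\log q|$ for $q \leq 1/2$, which is an elementary property of the concave function $x \mapsto -x\log x$ on $[0,1]$. Applying Corollary \ref{cor:main} then delivers the all-or-nothing phenomenon at $n^* = \lfloor \log\binom{N}{k}/h(q)\rfloor = (1+o(1))\,k \log(N/k)/h(q)$, completing the proof.
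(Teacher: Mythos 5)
Your proposal follows the paper's proof essentially step for step: the same channel quantities $p = 1-q$, $R_0(\rho) = q^{2-\rho}$, $R_1(\rho) = 1-2q+q^{2-\rho}$, the same overlap rate $r(\rho) = \rho$, the same algebraic reduction of condition \eqref{cond:main} to $(1-q)^{2-\rho} \geq 1 - 2q + q^{2-\rho}$, and the same concavity-plus-equal-endpoints argument to close it (your change of variable $s = 2-\rho$ is cosmetic, and your terse appeal to a property of $-x\log x$ is exactly the paper's computation that $g(q) = (1-q)\log(1-q) - q\log q$ is concave on $(0,1/2]$ with $g(0^+)=g(1/2)=0$). The only noticeable difference is that you derive the overlap rate function directly from Stirling asymptotics where the paper cites a lemma from prior work, and you are somewhat brief in checking the second part of Assumption \ref{assum:full_ov}.
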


The $q = 1/2$, $k = N^{o(1)}$ case of Theorem~\ref{thm:BGT} was proved by \cite{ScarlletAll}.
Our theorem extends their result to all sublinear sparsities and all $q \leq 1/2$. 
\textcolor{black}{In particular, we cover the commonly used choices of $q=1/e-o(1),$ with $\nu=1$, and $q=1/2-o(1)$ with $\nu=\ln 2$ (see e.g. \citep[Section 2]{survey}). When $q > 1/2$, condition~\eqref{cond:main} fails, and whether a result similar to Theorem~\ref{thm:BGT} holds in this regime remains open.}


\subsection{Application 2: Sparse Gaussian perceptron and Sparse Balanced Gaussian (SBG) models}

In this subsection, we turn our study to a family of what we call as \textit{Sparse Balanced Gaussian (SBG) models}.  Every such model can be characterized by some sparsity parameter $k=k_N=o(N)$ and a fixed ``balanced" Borel subset $A \subseteq \mathbb{R}$ with $P(Z \in A)=1/2$ for $Z \sim N(0,1)$.

\paragraph{ The Model} We assume as above that the signal $\theta$ is sampled from the uniform measure on the $k$-sparse binary vectors on the sphere in $N$-dimensions, i.e. $\Theta=\{\theta \in \{0,\frac{1}{\sqrt{k}}\}^N: \|\theta\|_0=k\}.$ We assume that the distribution for $X \in \RR^N$ is given by the standard Gaussian measure  $\mathcal{D}=N(0,I_N).$ Finally the channel is given by the formula $Y=g(X,\theta)=1(\inner{X_i,\theta} \in A).$

We highlight two models of this class that have been studied in different contexts.
\begin{itemize}
\item The case $A=[0,+\infty)$ corresponds to the well-studied \textit{ Gaussian perceptron} model with a sparse planted signal $Y=1(\inner{X,\theta} \geq 0).$ Variants of the Gaussian perceptron model have received enormous attention in learning theory and statistical physics (see e.g. ~\citep{ZdeKrz16,BarKrzMac19} and references therein). Recently the sparse version has been studied by~\cite{luneau20}.
\item The case $A=[-u,u]$ for some $u$ with $u$ such that $P(|Z| \leq u)=\frac{1}{2}$, which corresponds to what is known as the \textit{symmetric binary perceptron} model with a sparse planted signal $Y=1(|\inner{X,\theta} | \leq u)$~\citep{AubPerZde19}.
\end{itemize}
We establish a general result that all SBG models exhibit the all-or-nothing phenomenon at the same critical sample size.
\begin{theorem}\label{thm:SBG}
Suppose $k=o(N)$ and $A \subseteq \mathbb{R}$ be an arbitrary fixed Borel subset with $P(Z \in A)=1/2$ for $Z \sim N(0,1)$. Then the Sparse Balanced Gaussian model defined by $k$ and $A$ exhibits the all-or-nothing phenomenon at $n^*=\floor{ \frac{\log  \binom{N}{k}}{\log 2} }=(1+o(1))k \log _2\frac{N}{k}.$

\end{theorem}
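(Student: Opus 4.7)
The strategy is to verify the four hypotheses of Corollary~\ref{cor:main} in the SBG setting. Since $p = P(Z \in A) = 1/2$ for balanced $A$, the corollary's threshold $\lfloor \log M / h(p)\rfloor$ becomes $\lfloor \log \binom{N}{k}/\log 2\rfloor$, which matches the critical sample size in Theorem~\ref{thm:SBG}.

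First I would verify Assumption~\ref{assum:full_ov} and compute the overlap rate. For the prior uniform on $\Theta = \{(1/\sqrt k)\mathbf{1}_S : S \subseteq [N], |S| = k\}$, the overlap $\langle \theta, \theta'\rangle = |S \cap S'|/k$ is hypergeometric, and standard Stirling estimates (as in~\cite{NilZad20}) give
\begin{equation*}
\limsup_N \frac{\log P_\Theta^{\otimes 2}(\langle \theta,\theta'\rangle \geq \rho)}{\log M} = -\rho,
\end{equation*}
when $k = o(N)$, so an overlap rate function is $r(\rho) = \rho$. This also yields~\eqref{assum:full_ov2} (near-full overlap is dominated by exact overlap) and~\eqref{in_prob} (the overlap with any fixed $\theta$ concentrates at its mean $k/N \to 0$).

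Next I would verify Assumptions~\ref{assum:r_low_ov} and~\ref{assum:r_new}. By rotational invariance, $(\langle X,\theta\rangle, \langle X,\theta'\rangle)$ is bivariate normal with unit variances and correlation $\rho = \langle\theta,\theta'\rangle$. Writing $1_A = \sum_{j \geq 0} c_j \tilde H_j$ in the orthonormal Hermite basis, Mehler's formula gives
\begin{equation*}
R_1(\rho) = \E[1_A(Z_1) 1_A(Z_2)] = \sum_{j \geq 0} c_j^2 \rho^j = \tfrac14 + \sum_{j \geq 1} c_j^2 \rho^j,
\end{equation*}
since $c_0 = \E[1_A] = 1/2$ and hence $\sum_{j \geq 1} c_j^2 = 1/4$. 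The power series has non-negative coefficients so $R_1$ is non-decreasing on $[0,1]$, giving Assumption~\ref{assum:r_new} for $R_1$. Balancedness forces $R_0 = R_1$ directly: the marginal identity $P(Z_1 \in A, Z_2 \notin A) = 1/2 - R_1$ yields $R_0 = 1 - R_1 - 2(1/2 - R_1) = R_1$. Thus $R = 2R_1$ is continuous and strictly increasing on $[0,1]$ (since $\sum_{j\geq 1} c_j^2 > 0$), fulfilling Assumption~\ref{assum:r_low_ov}.

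The key step is the overlap condition~\eqref{cond:main}, which with $p = 1/2$ and $R_0 = R_1$ simplifies to $r(\rho) \geq \log_2(4R_1(\rho))$, i.e.
\begin{equation*}
R_1(\rho) \leq 2^{\rho - 2}, \qquad \rho \in [0, 1].
\end{equation*}
I would prove this using Borell's Gaussian noise stability theorem: among Borel $A$ with $P(Z \in A) = 1/2$, the half-space $A^* = [0, \infty)$ maximizes $R_1(\rho)$, giving $R_1(\rho) \leq 1/4 + \arcsin(\rho)/(2\pi)$. The problem then reduces to the one-variable inequality $f(\rho) := 2^{\rho - 2} - 1/4 - \arcsin(\rho)/(2\pi) \geq 0$ on $[0,1]$, with equality at $\rho = 0, 1$. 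I expect this analytic step to be the main obstacle, since both sides are convex and coincide at the endpoints, so convexity alone is insufficient. Instead, I would track $f'(\rho) = (\log 2) 2^{\rho-2} - 1/(2\pi\sqrt{1-\rho^2})$: the equation $f'(\rho) = 0$ is equivalent to $2^\rho\sqrt{1-\rho^2} = 2/(\pi \log 2)$, and since $\rho \mapsto 2^\rho \sqrt{1-\rho^2}$ is unimodal on $[0,1]$ (its derivative has the single interior root of the decreasing function $(1-\rho^2)\log 2 - \rho$) and starts at $1 > 2/(\pi\log 2)$, $f'$ has a unique zero in $(0,1)$. Therefore $f$ has a single interior critical point, necessarily a maximum, and so $f \geq \min(f(0),f(1)) = 0$ on $[0,1]$. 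With all four hypotheses of Corollary~\ref{cor:main} verified, the all-or-nothing phenomenon at $n^*$ follows.
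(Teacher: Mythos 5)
Your proposal is correct and follows essentially the same route as the paper: apply Corollary~\ref{cor:main}, compute $p=1/2$, establish the overlap rate $r(\rho)=\rho$ for the sparse binary prior, verify Assumptions~\ref{assum:r_low_ov} and~\ref{assum:r_new} via the Hermite/Mehler expansion (the paper packages this as Lemma~\ref{lem:gaussian_increasing}), use Borell's noise stability theorem to reduce to the half-space case, and finish with the elementary inequality $2^\rho \geq 1 + \tfrac{2}{\pi}\arcsin\rho$. The only cosmetic difference is in the last calculus step: you show $f'$ has a unique interior zero via unimodality of $\rho\mapsto 2^\rho\sqrt{1-\rho^2}$, while the paper argues by contradiction through Rolle's theorem and concavity of $\rho\ln 2+\tfrac12\ln(1-\rho^2)$; both are sound and equivalent in spirit.
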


In the context of Gaussian perceptron it has recently been proven~\citep{luneau20} that the all-or-nothing phenomenon holds for any $\omega(N^{\frac{8}{9}})=k=o(N)$. Theorem \ref{thm:SBG} generalizes this result to any $k=o(N)$, even constant.
To our knowledge, the existence of such a transition for the symmetric binary perceptron and other SBG models is new.

\bibliographystyle{alpha}
\bibliography{colt}

\newpage
\appendix

\section{Auxilary results and important preliminary concepts}

\textcolor{black}{All the results in this auxilary section holds under our framework and assumptions as described in Section \ref{sec:framework}.}

We start with an elementary lemma. \begin{lemma}\label{lem:nstar} The sequence of critical sample sizes $n^*$ defined in \eqref{eq:nstar} satisfies,
\begin{itemize}
\item[(i)] $\lim_N n^*=+\infty$
\item[(ii)] $n^*=(1+o(1))\frac{H(\theta)}{H(Y)},$ as $N \rightarrow +\infty$.

\end{itemize}

\end{lemma}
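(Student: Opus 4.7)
The plan is to argue that both parts follow immediately from two ingredients: the fact that $H(\theta)$ diverges because $M_N \to \infty$, and the fact that $H(Y)$ is bounded above by a constant because $|\mathcal{Y}|$ does not grow with $N$. Once these are combined, both conclusions reduce to elementary properties of the floor function applied to a diverging sequence.

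For part (i), I would begin by observing that since $\theta$ is drawn uniformly from $\Theta$ with $|\Theta|=M_N$, we have $H(\theta)=\log M_N$, which by the assumption $M_N\to\infty$ tends to $+\infty$. On the other hand, $Y=g(X,\theta)$ takes values in the fixed finite set $\mathcal{Y}$, so $H(Y)\leq \log|\mathcal{Y}|$, a constant independent of $N$. Consequently
\[
\frac{H(\theta)}{H(Y)} \;\geq\; \frac{\log M_N}{\log|\mathcal{Y}|} \;\longrightarrow\; +\infty.
\]
Since $n^*=\lfloor H(\theta)/H(Y)\rfloor$ differs from $H(\theta)/H(Y)$ by at most $1$, it follows that $n^*\to+\infty$.

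For part (ii), write $x_N := H(\theta)/H(Y)$. By definition $n^* = \lfloor x_N \rfloor$, so $|n^* - x_N| \leq 1$. Dividing by $x_N$ (which is positive and diverges to $+\infty$ by part (i)) gives
\[
\left|\frac{n^*}{x_N} - 1\right| \;\leq\; \frac{1}{x_N} \;\longrightarrow\; 0,
\]
which is exactly the claim $n^* = (1+o(1))\, H(\theta)/H(Y)$.

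There is no real obstacle here; the lemma is essentially a bookkeeping statement that converts the definition $n^*=\lfloor H(\theta)/H(Y)\rfloor$ into an asymptotic equivalence, using only that $M_N\to\infty$ (so the numerator diverges) and that $|\mathcal{Y}|$ is constant (so the denominator is $O(1)$ and bounded away from $0$ in the cases of interest, which is implicit in $n^*$ being well defined). The only thing worth double-checking is that $H(Y)>0$, which can be justified by noting that in all models considered the channel is nondegenerate; equivalently, $n^*$ would not be meaningful otherwise.
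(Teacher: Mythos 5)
Your proof is correct and follows essentially the same route as the paper: $H(\theta)=\log M_N\to\infty$, $H(Y)\leq\log|\mathcal{Y}|=O(1)$, hence $H(\theta)/H(Y)\to\infty$, and then the floor-function asymptotic $\lfloor x_N\rfloor=(1+o(1))x_N$ for diverging $x_N$. Your explicit remark that one needs $H(Y)>0$ is a reasonable point to flag, but it matches what the paper tacitly assumes.
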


\begin{proof}
Recall that in all our models we assume $H(\theta)=\log M_N \rightarrow +\infty$ as $N \rightarrow +\infty$. Furthermore, we assume that $Y=g(X,\theta)$ is a random variable supported on a subset of $\mathcal{Y}$ where $|\mathcal{Y}|=O(1)$. Hence, $H(Y) \leq \log |\mathcal{Y}|=O(1).$ In particular in all our models it holds $$\lim_N \frac{H(\theta)}{H(Y)}=+\infty.$$ This establishes the first property. The second property follows since for any real valued sequence $X_N, N \in \mathbb{N}$ with $\lim_N x_N =+\infty,$ it holds $ \lim_N \frac{x_N}{\floor{x_N}}=1.$

\end{proof}

We continue with a crucial proposition for our main result. This proposition establishes a connection between the critical sample size $n^*$, the estimation error manifested in the form of the entropy of the posterior distribution.

To properly establish it we need some additional notation, and the definition of an appropriate ``null" distribution on the observation $(Y^n,X^n)$. 

\begin{definition} \label{dfn:planted} 
We denote by $P_n=P_n(Y^n,X^n)$ the law of the observation under our model, i.e. for any measurable $A$, $P_n(A)=\E_{\theta \sim P_{\Theta}} P( (Y^n,X^n, \theta) \in A).$ In words, $P_n$ generates $(Y^n,X^n)$ by first sampling $\theta$ from the prior, independently sampling $X^n$ in an i.i.d. fashion from $\cD$, and then generating $Y^n$ by the conditional law $P(Y^n|\theta, X^n)$.
\end{definition} 
Notice that in the noiseless case studied in this work, the latter conditional law greatly simplifies to a dirac mass at $Y^n=(g(X_i,\theta))_{i=1}^n.$

\begin{definition}\label{dfn:null} 
Denote, as usual, by $Y=g(X,\theta)$ the random variable where $X \sim \cD$ and $\theta \sim P_{\Theta}$ are independent. We define by $Q_n=Q_n(Y^n,X^n)$ the ``null" distribution on $n$ samples, where the observations are generated as follows. We sample $X^n$ in an i.i.d. fashion from $\cD$ and then generate $Y^n$ in an i.i.d. fashion from the law of $Y$, \textit{independently} from $X^n$.
\end{definition} Notice first that the marginals of $Y_i$ under $Q_n$ are identical to the marginals of $Y_i$ under $P_n$. Yet, the joint law of $Q_n$ does not include any ``signal" $\theta$ and $Y^n$ are independent of $X^n$. Naturally, is not possible to estimate any signal $\theta$ with observations coming from the null model $Q_n$.

The following proposition holds.
\begin{proposition}\label{prop:key_identity}
For $(\theta,Y^n,X^n)$ generated according to $P_n$, and $Q_n$ defined in Definition \ref{dfn:null} we have,
\begin{align}\label{eq:key_id1}
\mi[X^n]{\theta}{Y^n} =n H(Y)-\kl{P_{n}}{Q_n}.
\end{align} and therefore
\begin{align}\label{eq:key_id2}
\left(1-\frac{H(\theta|Y^n,X^n)}{H(\theta)}\right)+ \frac{\kl{P_{n}}{Q_n}}{H(\theta)}=(1+o(1))\frac{n}{n^*}.
\end{align} 

\end{proposition}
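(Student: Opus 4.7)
The plan is to prove both identities by direct manipulation of entropic quantities, leveraging two structural features of the model: (i) the channel is noiseless, so that $Y^n$ is a deterministic function of $(X^n,\theta)$, and (ii) the signal $\theta$ is drawn independently of the covariates $X^n$.

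For \eqref{eq:key_id1}, I would start from the identity $\mi[X^n]{\theta}{Y^n}=\ent[X^n]{Y^n}-\ent[X^n,\theta]{Y^n}$. The noiseless assumption kills the second term, yielding $\mi[X^n]{\theta}{Y^n}=\ent[X^n]{Y^n}$. Next I would expand $\kl{P_n}{Q_n}$. Since $P_n$ and $Q_n$ agree on the marginal of $X^n$ (both draw the columns i.i.d.\ from $\cD$), the likelihood ratio is just $P_n(Y^n\mid X^n)/Q_n(Y^n\mid X^n)$. Under $Q_n$, the $Y_i$ are i.i.d.\ from the marginal law $P_Y$ of $Y=g(X,\theta)$ and independent of $X^n$, so $Q_n(Y^n\mid X^n)=\prod_{i=1}^n P_Y(Y_i)$. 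Taking expectations under $P_n$ and using that the marginal of each $Y_i$ under $P_n$ is still $P_Y$, one gets
\[
-\E_{P_n}[\log Q_n(Y^n\mid X^n)] = -n\,\E[\log P_Y(Y)] = nH(Y),
\qquad -\E_{P_n}[\log P_n(Y^n\mid X^n)] = \ent[X^n]{Y^n}.
\]
Subtracting gives $\kl{P_n}{Q_n}=nH(Y)-\ent[X^n]{Y^n}$, which combined with the previous display is exactly \eqref{eq:key_id1}.

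For \eqref{eq:key_id2}, I would use the independence of $\theta$ and $X^n$ to write $\ent[X^n]{\theta}=\ent{\theta}$, and hence $\mi[X^n]{\theta}{Y^n}=\ent{\theta}-\ent[Y^n,X^n]{\theta}$. Plugging this into \eqref{eq:key_id1} and dividing by $\ent{\theta}$ yields
\[
\left(1-\frac{\ent[Y^n,X^n]{\theta}}{\ent{\theta}}\right)+\frac{\kl{P_n}{Q_n}}{\ent{\theta}}=\frac{nH(Y)}{\ent{\theta}}.
\]
Finally, Lemma~\ref{lem:nstar}(ii) gives $n^*=(1+o(1))\,\ent{\theta}/H(Y)$, so $nH(Y)/\ent{\theta}=(1+o(1))\,n/n^*$, which is the right-hand side of \eqref{eq:key_id2}.

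There is no real obstacle: the proof is a chain of standard information-theoretic identities, with the two key simplifications being noiselessness (which removes $\ent[X^n,\theta]{Y^n}$) and prior/covariate independence (which lets one replace $\ent[X^n]{\theta}$ by $\ent{\theta}$). The only bookkeeping point to watch is that $P_n$ and $Q_n$ share their $X^n$-marginal, so that the KL computation reduces cleanly to an expectation over conditional laws of $Y^n$ given $X^n$; after that, the rescaling by $\ent{\theta}$ together with Lemma~\ref{lem:nstar} converts $nH(Y)/\ent{\theta}$ into the desired $(1+o(1))\,n/n^*$.
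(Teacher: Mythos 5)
Your proof is correct and follows essentially the same route as the paper's: both use noiselessness to reduce the conditional mutual information to $\ent[X^n]{Y^n}$ (equivalently, to set $\mathrm P(Y^n\mid X^n,\theta)=1$), both exploit the shared $X^n$-marginal to reduce $\kl{P_n}{Q_n}$ to an expectation over conditional laws, and both use the marginal identity $P(Y_i)=Q(Y_i)$ together with independence under $Q_n$ to extract $nH(Y)$. The derivation of \eqref{eq:key_id2} from \eqref{eq:key_id1} via $\theta\perp X^n$ and Lemma~\ref{lem:nstar}(ii) also matches the paper verbatim.
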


\begin{proof}
Note that \eqref{eq:key_id2} follows from \eqref{eq:key_id1} directly from part (ii) of Lemma \ref{lem:nstar}, along with the identity
\begin{align*} 
\mi[X^n]{\theta}{Y^n}=H(\theta|X^n)-H(\theta|Y^n,X^n)=H(\theta)-H(\theta|Y^n,X^n),
\end{align*}
where the second equality uses that $X^n$ is independent of $\theta$.

We now prove \eqref{eq:key_id2}. We have
\begin{align*}
\mi[X^n]{\theta}{Y^n} & = \E_{X^n} \left[\E_{\theta, Y^n \mid X^n} \log \frac{\mathrm P(Y^n \mid X^n, \theta)}{\mathrm P(Y^n \mid X^n)}\right] \\
& =  \E_{X^n} \left[\E_{\theta, Y^n \mid X^n} \log \frac{1}{\mathrm P(Y^n \mid X^n)}\right]\\
& = \E_{X^n}\left[\E_{\theta, Y^n \mid X^n} \log \frac{1}{\mathrm Q(Y^n)}\right]  -  \E_{X^n}\left[\E_{\theta, Y^n \mid X^n} \log \frac{\mathrm P(Y^n \mid X^n)}{\mathrm Q(Y^n)}\right]
\end{align*}
The second term is $\kl{P_{n}}{Q_n}$.
For the first term, we have that $Q(Y^n) = \prod_{i=1}^n Q(Y_i)$ by the definition of $Q$. Since by assumption also $Q(Y_i)=P(Y_i)$ for each $i$ we conclude 
$$ \E_{X^n}\left[\E_{\theta, Y^n \mid X^n} \log \frac{1}{\mathrm Q(Y^n)}\right]=n \E_{X_1}\left[\E_{\theta, Y_1 \mid X_1} \log \frac{1}{\mathrm P(Y_1)}\right]=n H(Y).$$
The proof is complete.

\end{proof}

\begin{proposition}\label{prop:ent_post}

 Suppose that  \eqref{mmse0} holds for some sequence of sample size $n=n_N$.
Then we also have
\begin{align}\label{eq:all_fake}
\lim_{N \rightarrow +\infty} \frac{H(\theta|Y^n,X^n)}{H(\theta)}=0.
\end{align}
\end{proposition}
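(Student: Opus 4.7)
The plan is to express the conditional entropy combinatorially and then split according to a ``near/far'' event controlled by $Z_{N,\delta}$, combining the hypothesis \eqref{assum:no_sol} with Assumption~\ref{assum:full_ov} to drive each piece to zero.

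First, I would exploit the uniform structure of the posterior. Since the prior is uniform and the channel is noiseless, conditional on $(Y^n,X^n)$ the random variable $\theta$ is uniformly distributed on the solution set $S(Y^n,X^n) = \{\theta' \in \Theta : g(X_i,\theta') = Y_i \text{ for all } i\}$, whose cardinality is exactly $Z_{N,0}(Y^n,X^n)$. Therefore $H(\theta\mid Y^n,X^n) = \E[\log Z_{N,0}]$, and it suffices to control this expectation.

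Next, I would fix an arbitrary $\delta>0$ and decompose according to the event $\{Z_{N,\delta}=0\}$. On this event, every surviving $\theta' \in S(Y^n,X^n)$ satisfies $\|\theta-\theta'\|^2 < \delta$, which, since $\theta,\theta'$ lie on the unit sphere, is equivalent to $\inner{\theta,\theta'} > 1-\delta/2$. Letting $N_\delta(\theta) = |\{\theta'\in\Theta : \inner{\theta,\theta'} \geq 1-\delta/2\}|$, I would bound $Z_{N,0} \leq N_\delta(\theta)$ on $\{Z_{N,\delta}=0\}$ and use the trivial bound $Z_{N,0} \leq M$ on the complement. This yields
\begin{equation*}
H(\theta \mid Y^n,X^n) \;\leq\; \E[\log N_\delta(\theta)] \;+\; \log M \cdot P(Z_{N,\delta} > 0).
\end{equation*}

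Applying Jensen's inequality to the concave function $\log$ gives $\E[\log N_\delta(\theta)] \leq \log \E[N_\delta(\theta)] = \log\bigl(M \cdot P_\Theta^{\otimes 2}(\inner{\theta,\theta'} \geq 1-\delta/2)\bigr)$. Dividing by $H(\theta) = \log M$ and letting $N \to \infty$, the second term vanishes by the hypothesis \eqref{assum:no_sol}, while the first is exactly the quantity appearing in \eqref{assum:full_ov2}. Letting $\delta \to 0^+$ on the resulting bound then drives it to zero. The only delicate point in the argument is respecting the order of limits --- $N \to \infty$ before $\delta \to 0^+$ --- but this is precisely the order baked into Assumption~\ref{assum:full_ov2}, so no further work is needed.
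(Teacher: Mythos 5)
Your proof is correct and follows essentially the same route as the paper: both arguments rewrite $H(\theta\mid Y^n,X^n)=\E[\log Z_{N,0}]$, split according to whether any far solution survives, bound the near count by Jensen's inequality against $M\,P_\Theta^{\otimes 2}(\inner{\theta,\theta'}\geq 1-\delta/2)$ and the far event by $\log M\cdot P(Z_{N,\delta}>0)$, then take $N\to\infty$ before $\delta\to 0^+$ exactly as Assumption~\ref{assum:full_ov} prescribes. Your event-based decomposition $\{Z_{N,\delta}=0\}$ versus its complement is a slightly cleaner way to organize the same bound than the paper's $\log(a+b)\leq\log 2+\log\max\{a,b\}$ manipulation, but the content is identical.
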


\begin{proof}

Recall that since the prior is uniform and the model is noiseless, the posterior is simply the uniform distribution over the solutions $\theta'$ of the system of equations \eqref{eq:observ2}. Hence, using the notation of Definition \ref{dfn:ZN}, $Z_{N,0}$ is the random variable which is equal to the number of such solutions. Hence, \eqref{eq:all_fake} is equivalent to
\begin{align*}
\lim_{N \rightarrow +\infty} \frac{\E \log Z_{N,0} }{\log M}=0.
\end{align*}where we used that $H(\theta)=\log M$.

Now fix a $\delta \in (0,2]$ and let $A_{\delta}:=\{\theta'' \in \Theta: \|\theta''-\theta\| \geq \delta\}.$ Notice that almost surely \begin{align*} \frac{Z_{N,\delta}}{Z_{N,0}} Z_{N,0}=Z_{N,\delta} \leq |A_{\delta}|\end{align*} and 
\begin{align*}
(1-\frac{Z_{N,\delta}}{Z_{N,0}} )Z_{N,0}=Z_{N,0}-Z_{N,\delta} \leq |\Theta \setminus A_{\delta}|.
\end{align*} Hence if we denote for simplicity $p_{\delta}:= \frac{Z_{N,\delta}}{Z_{N,0}}$ we have,
\begin{align*}
\log Z_{N,0} &  \leq p_{\delta} \log ( \frac{|A_{\delta}|}{p_{\delta}})+(1-p_{\delta}) \log ( \frac{|\Theta \setminus A_{\delta}|}{1-p_{\delta}})\\
&  \leq p_{\delta} \log M+\log |\Theta \setminus A_{\delta}| +h(p_{\delta}),
\end{align*}where $h$ is the binary entropy. Since $h(p_{\delta}) \leq \ln 2$ and $ \lim_N \log M_N=+\infty$, we conclude that for any $\delta \in (0,2]$
\begin{align*}
\frac{\E \log Z_{N,0}}{ \log M} & \leq \E p_{\delta}+\E \frac{\log |\Theta \setminus A_{\delta}|}{\log M}+\frac{\ln 2}{\log M}\\
&= \E \frac{Z_{N,\delta}}{Z_{N,0}}+ \E \frac{ \log ( M P_{\theta' \sim P_{\Theta}}(  \|\theta-\theta'\| < \delta))}{\log M}+\frac{\ln 2}{\log M}\\ 
&= \E \frac{Z_{N,\delta}}{Z_{N,0}}+ \E \frac{ \log ( M P_{\theta' \sim P_{\Theta}}(  \inner{ \theta,  \theta'} > 1-\delta^2/2 ))}{\log M}+\frac{\ln 2}{\log M}\\ 
& \leq \E \frac{Z_{N,\delta}}{Z_{N,0}}+\frac{  \log ( M P^{\otimes 2}_{\Theta}(  \inner{ \theta,  \theta'} > 1-\delta^2/2 ))}{\log M}+\frac{\ln 2}{\log M}.
\end{align*}where in the last inequality we used Jensen's inequality and the fact that the logarithm is concave. Now we send first $N$ to infinity and then $\delta$ to zero and show that the right hand side of the last inequality converges to zero. The third term clearly vanishes. The second first term vanishes by the double limit by using Assumption \ref{assum:full_ov}. The first term vanishes by the first limit since \eqref{assum:no_sol_new}. The proof follows.

\end{proof}

We state and prove here a foklore result in the statistical physics literature called the ``Nishimori" identity, which will be useful in what follows.
\begin{lemma}\label{lem:nishimori}
It always holds that if $\theta'$ is a random variable drawn from the posterior distribution $P_{\theta|Y^n,X^n}$ that
\begin{align*}
\mmse{N}{n}=1-\E \inner{\theta,\theta'}.
\end{align*}

\end{lemma}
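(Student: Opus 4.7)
The plan is to expand the squared norm in the definition of the MMSE, exploit the fact that the prior is supported on the unit sphere, and then use the tower property together with the defining property of posterior sampling (``Nishimori") that conditionally on $(Y^n, X^n)$ the true signal $\theta$ and the replica $\theta'$ are i.i.d.~draws from the posterior.

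Concretely, I would first write
\begin{align*}
\mmse{N}{n} = \E \|\theta - \E[\theta \mid Y^n, X^n]\|^2 = \E \|\theta\|^2 - 2 \E \inner{\theta, \E[\theta \mid Y^n, X^n]} + \E \|\E[\theta \mid Y^n, X^n]\|^2.
\end{align*}
Since $\Theta$ is contained in the unit sphere, $\E\|\theta\|^2 = 1$, so it suffices to show that both of the remaining terms are equal to $\E \inner{\theta, \theta'}$. For the cross term, conditioning on $(Y^n, X^n)$ and applying the tower property immediately yields
\begin{align*}
\E \inner{\theta, \E[\theta \mid Y^n, X^n]} = \E \|\E[\theta \mid Y^n, X^n]\|^2,
\end{align*}
so the only remaining task is to identify this squared norm with $\E \inner{\theta, \theta'}$.

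For the key step, I would use that under $P_n$ the posterior law of $\theta$ given $(Y^n, X^n)$ coincides with the conditional law from which $\theta'$ is drawn; hence conditionally on $(Y^n, X^n)$, the pair $(\theta, \theta')$ consists of two i.i.d.~samples from the posterior. In particular $\E[\theta \mid Y^n, X^n] = \E[\theta' \mid Y^n, X^n]$ and the two are conditionally independent, which gives
\begin{align*}
\E[\inner{\theta, \theta'} \mid Y^n, X^n] = \inner{\E[\theta \mid Y^n, X^n], \E[\theta' \mid Y^n, X^n]} = \|\E[\theta \mid Y^n, X^n]\|^2.
\end{align*}
Taking an outer expectation and combining with the display above yields
\begin{align*}
\mmse{N}{n} = 1 - 2 \E \inner{\theta, \theta'} + \E \inner{\theta, \theta'} = 1 - \E \inner{\theta, \theta'},
\end{align*}
as claimed. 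There is no real obstacle here: the only slightly delicate point is to justify that the pair $(\theta, \theta')$ is conditionally i.i.d.~given $(Y^n, X^n)$, but this is immediate from the definition of $\theta'$ as an independent draw from the posterior together with the fact that, by Bayes' rule, the posterior law of $\theta$ given $(Y^n, X^n)$ is the same measure.
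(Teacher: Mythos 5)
Your proof is correct and takes essentially the same approach as the paper: expand the squared norm, use $\E\|\theta\|^2 = 1$, and invoke the Nishimori observation that $(\theta,\theta')$ are conditionally i.i.d.\ from the posterior given $(Y^n,X^n)$ to identify both the cross term and the squared posterior mean with $\E\inner{\theta,\theta'}$. The paper phrases the key step via a second replica $\theta''$ and the equality in distribution of $\inner{\theta',\theta}$ and $\inner{\theta',\theta''}$, but this is the same fact you use; the organization differs only cosmetically.
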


\begin{proof}
Bayes' rule implies that the joint distribution of $\inner{\theta',\theta}$ is identical with the distribution of $\inner{\theta',\theta''}$ of two independent random variables drawn from the posterior distribution of $\theta$ given $Y^n,X^n$.
Therefore,
\begin{align*}
\E \inner{\E[\theta|Y^n,X^n],\theta}=\E \inner{\theta',\theta}= \E \inner{\theta',\theta''} = \E \|\E[\theta|Y^n,X^n]\|^2.
\end{align*}The result follows since 
\begin{align*}
\mmse{N}{n}=1+\E \|\E[\theta|Y^n,X^n]\|^2-2\E \inner{\E[\theta|Y^n,X^n],\theta}.
\end{align*}
\end{proof}

\section{Convex analysis}\label{app:conv}

\subsection{Background}
In this work, we use the following two results from convex analysis on the real line.

The first result concerns the left differentiability of a convex function on the interior of its domain.
\begin{theorem}\citep[Proposition I.4.1.1]{HirLem93}\label{thm:conv1}
For any interval $I \subset \mathbb{R},$ convex function $f: I \rightarrow \mathbb{R}$ and $x$ in the interior of $I$, the left derivative of $f$ exists at $x$.
\end{theorem}

The second result establishes that if a sequence of convex function defined on an open interval converges to a convex differentiable function, the pointwise convergence can be generalized to their (left) derivatives.
\begin{theorem}\citep[Proposition I.4.3.4]{HirLem93}\label{thm:conv2}
Fix an open interval $I \subset \mathbb{R},$ and consider a sequence $(f_n)_{n \in \mathbb{N}}: I \rightarrow \mathbb{R}$ of convex functions. Assume that $f_n$ converges pointwise to a differentiable $f: I \rightarrow \mathbb{R}$. Then the left derivatives of $f_n$ converge pointwise to the derivative of $f$, $f'$.
\end{theorem}
\subsection{A Key Proposition}
 Towards employing certain analytic techniques we consider the following function defined on $\RR_{> 0}$, which simply linear interpolates between the values \textcolor{black}{of} the sequence $\kl{P_{n}}{Q_n}/H(\theta), n \in \mathbb{N}$. We establish that the analytic properties of this function express various fundamental statistical properties of the inference setting of interest. Here and throughout this section, $P_n,Q_n$ are defined as in Definitions \ref{dfn:planted}, \ref{dfn:null}.

\begin{definition}\label{dfn:dnstar}
Let $D_{N} : (0,+\infty) \rightarrow [0,+\infty), N \in \mathbb{N}$ be the sequence of functions defined by 
\begin{align}
D_{N} (\beta):=(1-\beta n^*+\floor{\beta n^*})\frac{D(P_{\floor{\beta n^*}}||Q_{\floor{\beta n^*}})}{H(\theta)}+(\beta n^*-\floor{\beta n^*})\frac{D(P_{\floor{\beta n^*}+1}||Q_{\floor{\beta n^*}+1})}{H(\theta)}.
\end{align}

\end{definition}Notice that the normalization of the argument of $D_N$ is appropriately chosen such that $D_N(1)=\kl{P_{n^*}}{Q_{n^*}}/H(\theta).$
\begin{proposition}\label{prop:properties_D}
Consider the sequence of functions $D_{N}, $ per Definition \ref{dfn:dnstar}.
Then under our framework and assumptions described in Section \ref{sec:framework} the following hold.
\begin{itemize}
\item[(a)] For each $N$, $D_{N}$ is a convex, increasing, nonnegative function.
\item[(b)] For all fixed $\beta > 0$,
\begin{align*}
 \limsup_{N}D_{N}(\beta)  = \beta-1+\limsup_{N}\frac{H(\theta|X^{\floor{\beta n^*}},Y^{\floor{\beta n^*}})}{H(\theta)}.
\end{align*}
\item[(c)] For all fixed $\beta > 0$ and for each $N$, the function $D_{N}$ is left differentiable at $\beta$ and the left derivative at $\beta$ satisfies
 \begin{align}\label{eq:deriv}
 (D_{N} )'_{-}(\beta)=1-\frac{H(Y_{\ceil{\beta n^*}}| Y^{\ceil{\beta n^*}-1},X^{\ceil{\beta n^*}})}{H(Y)}+o(1),
\end{align}where the $o(1)$ term tends to zero as $N \rightarrow +\infty$.
\end{itemize}
\end{proposition}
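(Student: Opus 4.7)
The plan is to reduce everything to a single identity for the increments of $\kl{P_n}{Q_n}$ along the sample-size axis, then read off (a), (b), (c) as consequences. All three parts rest on the following computation: since $Q_n$ is a product measure whose $Y$-marginals agree with those of $P_n$, a direct manipulation (essentially the one carried out in the proof of Proposition~\ref{prop:key_identity}) gives
\begin{equation*}
\kl{P_n}{Q_n} = n H(Y) - H(Y^n\mid X^n),
\end{equation*}
so the discrete forward difference is
\begin{equation*}
\Delta_m \;\defeq\; \kl{P_{m+1}}{Q_{m+1}} - \kl{P_m}{Q_m} \;=\; H(Y) - H(Y_{m+1} \mid Y^m, X^{m+1}),
\end{equation*}
where I used that $X_{m+1}$ is independent of $(Y^m, X^m)$ under $P_{m+1}$, hence $H(Y^m \mid X^{m+1}) = H(Y^m \mid X^m)$. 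This one formula is the engine behind the entire proposition.

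For part (a), nonnegativity of $D_N$ is immediate from nonnegativity of KL divergence (linear interpolation preserves it). Monotonicity reduces to $\Delta_m \geq 0$, which holds because $H(Y_{m+1}\mid Y^m, X^{m+1}) \leq H(Y_{m+1}) = H(Y)$, so the interpolation $D_N$ is nondecreasing on the grid $\{k/n^*\}$ and hence everywhere. Convexity reduces to $\Delta_{m+1}\geq \Delta_m$, i.e.
\begin{equation*}
H(Y_{m+2}\mid Y^{m+1}, X^{m+2}) \;\leq\; H(Y_{m+1}\mid Y^m, X^{m+1}).
\end{equation*}
This is the main conceptual step. The argument is exchangeability: conditionally on $\theta$, the pairs $(Y_i,X_i)$ are i.i.d., so swapping the indices $m+1$ and $m+2$ gives $H(Y_{m+2}\mid Y^{m+1}, X^{m+2}) = H(Y_{m+1}\mid Y^m, Y_{m+2}, X^{m+2})$, and conditioning on the additional variable $Y_{m+2}$ can only decrease entropy. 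Since $D_N$ is piecewise linear on the grid, convexity of the sequence at the vertices transfers to convexity of $D_N$.

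For part (b), substitute $n H(Y) = H(\theta) + H(\theta\mid Y^n,X^n) - H(\theta)\cdot(1 - n/n^*\cdot n^*H(Y)/H(\theta))$ into the defining identity to obtain, for $n = \lfloor \beta n^*\rfloor$,
\begin{equation*}
\frac{\kl{P_n}{Q_n}}{H(\theta)} \;=\; \beta - 1 + \frac{H(\theta\mid Y^n, X^n)}{H(\theta)} + o(1),
\end{equation*}
where the $o(1)$ absorbs the error between $n H(Y)/H(\theta)$ and $\beta$ coming from Lemma~\ref{lem:nstar}(ii) and the floor. The same holds with $n+1$ in place of $n$. The linear interpolation $D_N(\beta)$ is a convex combination of these two quantities, so it equals $\beta - 1 + (\text{conv.~comb.~of two normalized posterior entropies}) + o(1)$. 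Finally, the two posterior entropies differ by at most $H(Y_{n+1}\mid Y^n, X^{n+1})\leq \log |\mathcal Y| = O(1)$, which becomes $o(1)$ after dividing by $H(\theta)\to\infty$. Taking $\limsup_N$ on both sides yields the claimed identity.

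For part (c), observe that $D_N$ is piecewise linear with breakpoints at $\beta\in \{k/n^*:k\in\mathbb N\}$, so the left derivative at any $\beta>0$ equals the slope of the segment ending at $\beta$, namely $n^*\Delta_{m-1}/H(\theta)$ with $m=\lceil \beta n^*\rceil$. Plugging in the formula for $\Delta_{m-1}$ and factoring out $n^*H(Y)/H(\theta) = 1 + o(1)$ gives
\begin{equation*}
(D_N)'_-(\beta) \;=\; (1+o(1))\left(1 - \frac{H(Y_m\mid Y^{m-1}, X^m)}{H(Y)}\right),
\end{equation*}
and since $H(Y_m\mid Y^{m-1}, X^m)/H(Y)\in[0,1]$ is bounded, the $o(1)$ multiplier can be pulled out as an additive $o(1)$, matching \eqref{eq:deriv}. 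The main delicate point throughout is the use of Lemma~\ref{lem:nstar}(ii) to replace $n H(Y)/H(\theta)$ by $n/n^*$ with uniformly vanishing error; the only genuinely conceptual ingredient is the exchangeability argument giving convexity in part (a).
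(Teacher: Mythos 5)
Your proof is correct and follows essentially the same strategy as the paper: rewrite $\kl{P_n}{Q_n}$ via Proposition~\ref{prop:key_identity} (you use the equivalent noiseless-channel form $\kl{P_n}{Q_n}=nH(Y)-H(Y^n\mid X^n)$, the paper works with $\mi[X^n]{\theta}{Y^n}$), extract the increment $\Delta_m=H(Y)-H(Y_{m+1}\mid Y^m,X^{m+1})$, and read off nonnegativity, monotonicity, and convexity from it. The only cosmetic difference is in the convexity step: you establish $\Delta_{m+1}\geq\Delta_m$ by swapping the labels of samples $m+1$ and $m+2$ and then dropping $(Y_{m+2},X_{m+2})$ from the conditioning, while the paper drops $(Y_1,X_1)$ and re-indexes by shifting; both are the same ``exchangeability plus conditioning reduces entropy'' argument. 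Parts (b) and (c) match the paper's derivation, including the use of Lemma~\ref{lem:nstar}(ii) to trade $n^*H(Y)/H(\theta)$ for $1+o(1)$ and the observation that $D_N$ is piecewise linear so the left derivative is the slope of the segment ending at $\beta$.
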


\begin{proof}
We start with part (a).

Since $D_{N}$ is a linear interpolation of the sequence $\frac{\kl{P_n}{Q_n}}{H(\theta)}, n \in \mathbb{N}$ and $H(\theta) > 0$, it suffices to show the same properties for the sequence $\kl{P_n}{Q_n}, n \in \mathbb{N}.$
The nonnegativitiy is obvious.
For a fixed $n \in \mathbb{N}$ we have using the identity \eqref{eq:key_id1} from Proposition \ref{prop:key_identity} that 
\begin{align*}
\kl{P_{n+1}}{Q_{n+1}}-\kl{P_n}{Q_n}=H(Y)-\mi[X^{n+1}]{\theta}{Y^{n+1}}+\mi[X^n]{\theta}{Y^n}
\end{align*}By the chain rule for mutual information, its definition and \textcolor{black}{the independence of the $X_i$'s} we have
\begin{align*}
\mi[X^{n+1}]{\theta}{Y^{n+1}}&=\mi[X^{n+1},Y^{n}]{\theta}{Y_{n+1}}+\mi[X^{n+1}]{\theta}{Y^n}\\
&=\mi[X^{n+1},Y^{n}]{\theta}{Y_{n+1}}+\mi[X^n]{\theta}{Y^n}.
\end{align*}Combining the above and using the definition of the mutual information and the fact that our channel $Y_i=g(X_i,\theta)$ is noiseless, we obtain
\begin{align}
\kl{P_{n+1}}{Q_{n+1}}-\kl{P_n}{Q_n}&=H(Y)-\mi[X^{n+1},Y^{n}]{\theta}{Y_{n+1}} \nonumber \\
&=H(Y)-H(Y_{n+1}|X^{n+1},Y^{n})+H(Y_{n+1}|X^{n+1},Y^{n},\theta) \nonumber \\
&=H(Y)-H(Y_{n+1}|X^{n+1},Y^{n}) \label{DiffDnEnt}\\
&=\mi[]{Y_{n+1}}{X^{n+1},Y^{n}} \label{diffDn}
\end{align}
Now the increasing property of the sequence follows from the fact that the mutual information is non-negative. For the convexity, it suffices to show that the right hand side of \eqref{diffDn} is nondecreasing. Indeed, notice that for each $n$ from the fact that conditioning reduces entropy,
\begin{align*}
\mi[]{Y_{n+1}}{X^{n+1},Y^{n}} &=H(Y_{n+1})-H(Y_{n+1} |X^{n+1},Y^{n})\\
& \geq  H(Y_{n+1})-H(Y_{n+1} |X_2,\ldots,X_{n+1},Y_2,\ldots Y_{n})\\
& = \mi[]{Y_{n+1}}{X_2,\ldots,X_{n+1},Y_2,\ldots Y_{n}}\\
& = \mi[]{Y_{n}}{X_1,\ldots,X_{n},Y_1,\ldots Y_{n-1}}\\
&=\mi[]{Y_{n}}{X^{n},Y^{n-1}}. 
\end{align*}This completes the proof of part (a).

For part (b) notice that from Proposition \ref{prop:key_identity} we have for each fixed $\beta>0$
\begin{align}
\frac{\kl{P_{\floor{\beta n^*}}}{Q_{\floor{\beta n^*}}}}{H(\theta)}&=(1+o(1))\frac{\floor{\beta n^*}}{n^*}-1+\frac{H(\theta|X^{\floor{\beta n^*}},Y^{\floor{\beta n^*}})}{H(\theta)} \nonumber \\
&=\beta-1+\frac{H(\theta|X^{\floor{\beta n^*}},Y^{\floor{\beta n^*}})}{H(\theta)}+o(1), \label{eq:decompDn}
\end{align}since $n^* \rightarrow +\infty$ by Lemma \ref{lem:nstar}.
By~\eqref{DiffDnEnt},
\begin{equation*}
\frac{\kl{P_{\floor{\beta n^*}+1}}{Q_{\floor{\beta n^*}+1}}}{H(\theta)} - \frac{\kl{P_{\floor{\beta n^*}}}{Q_{\floor{\beta n^*}}}}{H(\theta)}\ \leq \frac{H(Y)}{H(\theta)} = o(1)\,,
\end{equation*}
since $H(Y) = O(1)$ and $H(\theta) \to \infty$.

Since $D_{N}(\beta)$ is a convex combination of $\frac{\kl{P_{\floor{\beta n^*}}}{Q_{\floor{\beta n^*}}}}{H(\theta)}$ and $\frac{\kl{P_{\floor{\beta n^*}+1}}{Q_{\floor{\beta n^*}+1}}}{H(\theta)}$, we conclude that 
\begin{align*}
\limsup_{N} D_{N}(\beta) & = 
\beta-1+\limsup_{N} \frac{H(\theta|X^{\floor{\beta n^*}},Y^{\floor{\beta n^*}})}{H(\theta)},
\end{align*}as we wanted.

For part (c), recall that $D_{N}$ is the piecewise linear interpolation of the convex sequence $\frac{\kl{P_{n}}{Q_{n}}}{H(\theta)}$.
By \citep[Proposition I.4.1.1]{HirLem93}, stated also in Theorem \ref{thm:conv1}, $D_N$ possesses a left derivative on the interior of its domain, so that $D_N$ is left-differentiable at $\beta$ for all $\beta > 0$.
Moreover, this left derivative is simply the slope of the segment which connects $(\frac{\ceil{\beta n^*} - 1}{n^*}, \frac{\kl{P_{\ceil{\beta n^*}-1}}{Q_{\ceil{\beta n^*}-1}}}{H(\theta)})$ and $(\frac{\ceil{\beta n^*} }{n^*}, \frac{\kl{P_{\ceil{\beta n^*}}}{Q_{\ceil{\beta n^*}}}}{H(\theta)})$, which equals
\begin{align*}
\frac{\frac{\kl{P_{\ceil{\beta n^*}}}{Q_{\ceil{\beta n^*}}}}{H(\theta)}-\frac{\kl{P_{\ceil{\beta n^*}-1}}{Q_{\ceil{\beta n^*}-1}}}{H(\theta)}}{\frac{\ceil{\beta n^*}}{n^*}-(\frac{\ceil{\beta n^*}-1}{n^*})}&=n^*\frac{\kl{P_{\ceil{\beta n^*}}}{Q_{\ceil{\beta n^*}}}-\kl{P_{\ceil{\beta n^*}-1}}{Q_{\ceil{\beta n^*}-1}}}{H(\theta)}\\
&=(1+o(1))\frac{\kl{P_{\ceil{\beta n^*}}}{Q_{\ceil{\beta n^*}}}-\kl{P_{\ceil{\beta n^*}-1}}{Q_{\ceil{\beta n^*}-1}}}{H(Y)},
\end{align*}where we use the second part of Lemma \ref{lem:nstar} for $n^*$ and the $o(1)$ term tends to zero as $n^*$ tends to infinity. Now using \eqref{DiffDnEnt} we conclude that the slope is 
\begin{align*}
(1+o(1))\left(1-\frac{H(Y_{\ceil{\beta n^*}}| Y^{\ceil{\beta n^*}-1},X^{\ceil{\beta n^*}})}{H(Y)}\right)=1-\frac{H(Y_{\ceil{\beta n^*}}| Y^{\ceil{\beta n^*}-1},X^{\ceil{\beta n^*}})}{H(Y)}+o(1).
\end{align*}
The proof is complete.
\end{proof}

\section{Proof of Theorem \ref{thm:main}: Turning ``all" into ``nothing"}

Recall that from Proposition \ref{prop:ent_post}, condition \eqref{mmse0} implies that  at $(1+\epsilon)n^*$ samples the entropy of the posterior distribution is of smaller order than the entropy of the prior.  Our first result towards proving Theorem \ref{thm:main} establishes two implications of this property of the entropy of the posterior.

\begin{lemma}\label{lem:implic_post}
Suppose that for all $\epsilon>0$ and $n \geq (1+\epsilon)n^*,$ \eqref{eq:all_fake} holds. Then we have,
\begin{itemize}
\item[(1)] (KL closeness) \begin{align}\label{eq:KL_bound}
\lim_{N} \frac{\kl{P_{ n^*}}{Q_{ n^*}}}{H(\theta)}=0,
\end{align} and
\item[(2)] (prediction ``nothing") for any fixed $\epsilon>0$, if $n \leq (1-\epsilon)n^*$, then
\begin{align}\label{eq:fake_nothing} 
\lim_{N}  \frac{H(Y_{n+1}|Y^{n},X^{n+1})}{H(Y)}=1.
\end{align}
\end{itemize}

\end{lemma}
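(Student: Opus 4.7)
The plan is to read both conclusions off the normalized KL curve $D_N$ from Definition~\ref{dfn:dnstar}: Proposition~\ref{prop:properties_D} has already translated the analytic behavior of $D_N$ into the two information-theoretic quantities we care about, with values encoding the posterior entropy ratio (part (b)) and left derivatives encoding the conditional predictive entropy ratio (part (c)), while part (a) gives convexity, monotonicity, and nonnegativity.

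For the first claim I would apply part (b) of Proposition~\ref{prop:properties_D} at $\beta = 1+\epsilon$. The hypothesis \eqref{eq:all_fake} gives $H(\theta \mid X^{\floor{(1+\epsilon)n^*}}, Y^{\floor{(1+\epsilon)n^*}})/H(\theta) \to 0$, so part (b) forces $\limsup_N D_N(1+\epsilon) = \epsilon$. Monotonicity of $D_N$ (part (a)) then yields $\limsup_N D_N(1) \leq \limsup_N D_N(1+\epsilon) = \epsilon$; since $\epsilon>0$ was arbitrary and $D_N(1) \geq 0$, we conclude $D_N(1) \to 0$, which is exactly \eqref{eq:KL_bound}.

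For the second claim, given a sequence $n = n_N \leq (1-\epsilon) n^*$, I would set $\beta_N := (n+1)/n^*$, so that $\ceil{\beta_N n^*} = n+1$ and $1 - \beta_N \geq \epsilon/2$ for all large $N$. The convex subgradient inequality applied to $D_N$ at $\beta_N$ (valid by part (a)) reads
\begin{equation*}
D_N(1) \geq D_N(\beta_N) + (D_N)'_{-}(\beta_N)(1 - \beta_N) \geq (D_N)'_{-}(\beta_N)(1 - \beta_N),
\end{equation*}
where the last step uses $D_N(\beta_N) \geq 0$. Dividing by $1 - \beta_N \geq \epsilon/2$ and invoking the first claim gives $0 \leq (D_N)'_{-}(\beta_N) \leq 2 D_N(1)/\epsilon \to 0$, with the lower bound coming from monotonicity. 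Plugging $\beta_N$ into part (c) of Proposition~\ref{prop:properties_D} then yields
\begin{equation*}
\frac{H(Y_{n+1} \mid Y^n, X^{n+1})}{H(Y)} = 1 - (D_N)'_{-}(\beta_N) + o(1) \longrightarrow 1,
\end{equation*}
which is \eqref{eq:fake_nothing}.

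The main conceptual step is the first claim: recognizing that the hypothesis, which controls the posterior entropy only at $n \geq (1+\epsilon) n^*$, nonetheless propagates \emph{backwards} to give KL control at $n^*$ itself through the combination of the decomposition in part (b) and monotonicity of $D_N$. Once this is established, the second claim is essentially a short convex-analysis computation, exploiting that a convex, nonnegative function that vanishes at a point must have vanishing left derivatives at every point strictly to the left. A small technical point is that part (c) of Proposition~\ref{prop:properties_D} is phrased for a fixed $\beta$; inspection of its proof shows that the $o(1)$ error (which comes from part (ii) of Lemma~\ref{lem:nstar}) depends only on $N$, so the identity applies verbatim to the sequence $\beta_N$.
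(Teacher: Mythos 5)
Your argument for part (1) is exactly the paper's: apply Proposition~\ref{prop:properties_D}(b) at $\beta = 1+\epsilon$, push the bound down to $\beta = 1$ via monotonicity, and let $\epsilon \to 0$.

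For part (2) you take a genuinely different and more elementary route through the convex analysis. The paper restricts the $D_N$ to the interval $[0,1]$, observes that $D_N(1) \to 0$ together with monotonicity and nonnegativity forces uniform convergence $D_N \to 0$ on $[0,1]$, and then invokes a general theorem~\citep[Proposition~I.4.3.4]{HirLem93} on convergence of left derivatives of a uniformly convergent sequence of convex functions; it also makes a side reduction (via the monotonicity in $n$ of $H(Y_{n+1}\mid Y^n, X^{n+1})$, i.e.\ convexity of $D_N$) to the case $n = \ceil{\beta n^*}-1$ for a \emph{fixed} $\beta \in (0,1)$, so that Proposition~\ref{prop:properties_D}(c) can be applied as stated. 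You instead use the supporting-line (subgradient) inequality directly at $\beta_N = (n+1)/n^*$, which gives the quantitative bound $0 \le (D_N)'_{-}(\beta_N) \le D_N(1)/(1-\beta_N) \le 2D_N(1)/\epsilon$ with no black-box theorem and no separate reduction to fixed $\beta$. This does require noting that the $o(1)$ in \eqref{eq:deriv} is uniform over $\beta$ bounded away from $0$ and $\infty$, since $\beta_N$ depends on $N$; you flag this and your justification is correct --- the only $N$-dependent error in the proof of part (c) comes from the $n^*/(H(\theta)/H(Y)) = 1+o(1)$ approximation in Lemma~\ref{lem:nstar}(ii), multiplied against a quantity in $[0,1]$, so it is uniform in $\beta$. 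Overall your version of part (2) is shorter and avoids the external reference, at the modest cost of the uniformity remark; the paper's version keeps the dependence on $N$ and on $\beta$ cleanly separated.
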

In words,  the sublinear entropy of the posterior implies 1) a ``KL-closeness" between the planted distribution $P_{n^*}$ and the null distribution $Q_{n^*}$, and 2) that the entropy of the observation $Y_{n+1}$ conditioned on knowing the past observations $Y^{n}$ and $X^{n+1}$ is (almost) equal to the unconditional entropy of $Y=Y_{n+1}$. While the first condition is, as already mentioned, hard to interpret (because of the $H(\theta)$ normalization), the second condition has  rigorous implication of the recovery problem of interest, because of the following lemma. We emphasize to the reader that towards establishing this lemma the use of an assumption such as Assumption \ref{assum:r_low_ov} is crucial.

\begin{lemma}\label{lem:nothing}
Suppose that \eqref{eq:fake_nothing} holds. Then for any fixed $\epsilon \in (0,1)$, if $n=n_N \leq (1-\epsilon)n^*$, then \begin{align*}
\lim_N \mmse{N}{n} =1,
\end{align*}i.e. \eqref{mmse1} holds.
\end{lemma}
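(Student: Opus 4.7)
The plan is to first reduce via the Nishimori identity (Lemma~\ref{lem:nishimori}) to showing $\E\langle\theta,\theta'\rangle\to 0$ with $\theta'$ drawn from the posterior, and then extract this from the entropy hypothesis \eqref{eq:fake_nothing} through a collision-probability identity that links it to Assumption~\ref{assum:r_low_ov}. To this end, I will introduce a fresh $X_{n+1}\sim\cD$ (independent of everything) together with a second independent posterior draw $\theta''$ given $(Y^n,X^n)$, and set $Y_{n+1}=g(X_{n+1},\theta)$, $Y''_{n+1}=g(X_{n+1},\theta'')$. Writing $q_y:=P(Y_{n+1}=y\mid Y^n,X^{n+1})$, one has $\E q_y=p_y:=P(Y=y)$, and both $Y_{n+1}$ and $Y''_{n+1}$ are conditionally i.i.d.~$\sim q$ given $(Y^n,X^{n+1})$.

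By the identity $H(Y)-H(Y_{n+1}\mid Y^n,X^{n+1})=\E\,\kl{q}{p}$ (using $\E q = p$), the hypothesis \eqref{eq:fake_nothing} becomes $\E\,\kl{q}{p}\to 0$. Pinsker's inequality together with $|\cY|=O(1)$ then yields $\E\sum_y q_y^2\to\sum_y p_y^2$. But by conditional independence of $Y_{n+1}$ and $Y''_{n+1}$, one has $P(Y_{n+1}=Y''_{n+1})=\E\sum_y q_y^2$, while marginalizing $X_{n+1}$ first and invoking Assumption~\ref{assum:r_low_ov} shows $P(Y_{n+1}=Y''_{n+1})=\E R(V)$ with $V:=\langle\theta,\theta''\rangle$. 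Thus $\E R(V)\to\sum_y p_y^2$. Moreover, Lemma~\ref{lem:nishimori} gives $\E V=1-\mmse{N}{n}$, so the goal becomes $\E V\to 0$.

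The main obstacle is this final step, which requires the identification $R(0)=\sum_y p_y^2$. The inequality $R(0)\leq\sum_y p_y^2$ follows from $\E R(V)\geq R(0)$, which holds since $V\in[0,1]$ and $R$ is increasing. For the reverse direction I would consider two independent prior draws $\theta_1,\theta_2$: conditioning on $X$ and applying Jensen to the conditionally i.i.d.\ pair $(g(X,\theta_1),g(X,\theta_2))$ gives $\E R(\langle\theta_1,\theta_2\rangle)\geq\sum_y p_y^2$, while \eqref{in_prob} of Assumption~\ref{assum:full_ov} combined with continuity of $R$ at $0^+$ forces this expectation to tend to $R(0)$, yielding $R(0)\geq\sum_y p_y^2$. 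Once $R(0)=\sum_y p_y^2$ is established, $\E[R(V)-R(0)]\to 0$ with $R(V)-R(0)\geq 0$, and strict monotonicity of $R$ ensures $R(\epsilon)>R(0)$ for every $\epsilon>0$. Markov's inequality then gives $P(V\geq\epsilon)\leq\E[R(V)-R(0)]/(R(\epsilon)-R(0))\to 0$, and boundedness of $V\in[0,1]$ concludes that $\E V\to 0$, whence $\mmse{N}{n}\to 1$.
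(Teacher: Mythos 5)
Your proof is correct, and it tracks the paper's argument closely in outline but departs at one step in a useful way. Both proofs observe that the hypothesis \eqref{eq:fake_nothing} is equivalent to $\E\,\kl{q}{p}\to 0$ (the paper phrases this as $\kl{P_{n+1}}{\tilde P_{n+1}}\to 0$, with $\tilde P_{n+1}$ the law decoupling $(Y_{n+1},X_{n+1})$ from the past); both pass through Pinsker; and both finish by driving $\E R(\inner{\theta,\theta'})$ to $R(0)$ and invoking strict monotonicity of $R$, Markov, and the Nishimori identity. The genuine difference is in the null comparison. The paper substitutes $Y_{n+1}$ under $\tilde P_{n+1}$ by $g(X_{n+1},\theta'')$ for a fresh prior draw $\theta''$, yielding $\E R(\inner{\theta'',\theta'})$; strictly speaking this substitution alters the joint law of $(Y_{n+1},X_{n+1})$ (under $\tilde P_{n+1}$ these are \emph{independent}, while $g(X_{n+1},\theta'')$ is not independent of $X_{n+1}$), so in fact $\tilde P_{n+1}\{g(X_{n+1},\theta')=Y_{n+1}\}=\sum_y p_y^2\leq\E R(\inner{\theta'',\theta'})$ by Jensen, with the inequality fortuitously pointing the right way. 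You instead identify the null quantity exactly as $\sum_y p_y^2$ through the collision probability $P(Y_{n+1}=Y''_{n+1})=\E\sum_y q_y^2=\E R(\inner{\theta,\theta''})$ of two conditionally i.i.d.~posterior draws, and then prove $\sum_y p_y^2\to R(0)$ as a separate step by the same Jensen argument on two independent prior draws combined with \eqref{in_prob} and continuity of $R$ at $0^+$. This is a clean and self-contained variant that makes the hidden Jensen inequality explicit. One small simplification: the direction $R(0)\leq\sum_y p_y^2$ is not needed; since $R(V)\geq R(0)$ pointwise, the one-sided estimate $0\leq\E[R(V)-R(0)]=\sum_y p_y^2-R(0)+o(1)\leq\E R(\inner{\theta_1,\theta_2})-R(0)+o(1)\to 0$ already suffices for the Markov step.
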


Notice that combining Proposition \ref{prop:ent_post}, the part (2) of Lemma \ref{lem:implic_post} and Lemma \ref{lem:nothing}, the proof of Theorem \ref{thm:main} follows in a straightforward manner. We proceed by establishing the two lemmas.

\subsection{Proof of Lemma \ref{lem:implic_post}}
\begin{proof} 
We start with establishing \eqref{eq:KL_bound}. Notice that for any $\epsilon>0$, combining Proposition \ref{prop:properties_D} part (b) for $\beta=1+\epsilon$ and the condition \eqref{eq:all_fake}, we have $$\limsup_{N} D_{N}(1+\epsilon) = \epsilon.$$ Using now that $D_{N}$ is increasing from Proposition \ref{prop:properties_D} part (a), we conclude   $$\limsup_{N} D_{N}(1) \leq \epsilon,$$ or as $\epsilon>0$ was arbitrary and $D_N$ is non-negative,
\begin{align}\label{eq:DN0}
\lim_{N} D_{N}(1) =0.
\end{align} The identity \eqref{eq:KL_bound} follows because for $\beta=1$, $\beta n^* \in \mathbb{N}$ and therefore $D_{N}(1)=\frac{\kl{P_{ n^*}}{Q_{ n^*}}}{H(\theta)}.$

We now show that \eqref{eq:KL_bound} implies \eqref{eq:fake_nothing}. Notice that using Proposition \ref{prop:properties_D} part (a), $D_{N}$ is a sequence of increasing, convex and non-negative functions which we restrict to be defined on the compact interval $[0,1]$. Hence combining with \eqref{eq:KL_bound} or the equivalent \eqref{eq:DN0}, we have $$ \lim_N \sup_{\beta \in [0,1]} |D_N(\beta)| = \lim_N D_N(1)=0.$$Therefore $D_{N}$ converges uniformly to the zero function.

Now to establish our result notice that since conditioning reduces entropy it suffices to consider the case where $n=\ceil{ \beta n^*}-1$ for some fixed $\beta \in (0,1)$. Using standard analysis result~\citep[Proposition I.4.3.4]{HirLem93}, stated also in Theorem \ref{thm:conv2}, since the functions $D_N$ are convex and converge uniformly to $0$ in the open interval $(0,1)$ we can conclude the left derivatives of $D_N(\beta)$ converge to the derivative of the zero function as well, i.e. 
 \begin{align*}
 \lim_N (D_{N} )'_{-}(\beta)=0.
\end{align*} Using now \eqref{eq:deriv} from Proposition \ref{prop:properties_D} part (c) for this $\beta$ we conclude the proof.

\end{proof}

\subsection{Proof of Lemma \ref{lem:nothing}}
\begin{proof}Fix some $\epsilon \in (0,1)$ and assume $n \leq (1-\epsilon)n^*$.
Denote the probability distribution $\tilde{P}_{n+1}$ on $(Y^{n+1},X^{n+1})$ where $(Y^{n},X^{n})$ are drawn from $P_{n}$ and $Y_{n+1},X_{n+1}$ are drawn independently from the marginals $P_Y, \cD$ respectively. Notice that $\tilde{P}$ is carefully chosen so that\begin{align*}
\kl{P_{n+1}}{\tilde P_{n+1}} =\E \log \frac{P(Y_{n+1}|Y^{n},X^{n+1})}{P(Y_{n+1})}=H(Y)- H(Y_{n+1}|Y^{n-1},X^{n+1})\,.
\end{align*}Hence using \eqref{eq:fake_nothing},
\begin{align*}
\kl{P_{n+1}}{\tilde P_{n+1}} =o(H(Y))=o(1)\,,
\end{align*}where we used the assumption that $H(Y) \leq \log |\mathcal{Y}|=O(1).$ Using Pinsker's inequality we conclude 
\begin{align*}
\lim_N \tv{P_{n+1}}{\tilde P_{n+1}} =0.
\end{align*}

Now we denote by $\theta'$ a sample from the posterior distribution $P_{n} (\theta|Y^{n},X^{n})$. Using the total variation guarantee we have
\begin{equation*}
P_{n+1}\left\{g(X_{n+1}, \theta') = Y_{n+1}\right\} = \tilde P_{n+1}\left\{g(X_{n+1}, \theta') = Y_{n+1}\right\} + o(1)\,.
\end{equation*}
Under $\tilde P$, because of its definition, we can write $Y_{n+1}$ as $g(X_{n+1}, \theta'')$, where $\theta'' \sim P_\theta$ is independent of everything else. Using Assumption \ref{assum:r_low_ov} we conclude
\begin{equation*}
\E R(\inner{  \theta,\theta'}) = \E R(\inner{ \theta'',\theta'})+ o(1).
\end{equation*}Furthermore, using \eqref{in_prob} and the fact that $\theta''$ is independent from $\theta'$ we conclude that for any $\epsilon>0,$
\begin{equation*}
\limsup_N \E R(\inner{  \theta,\theta'}) \leq R(\epsilon)
\end{equation*} Hence by continuity of $R$ at $0$ we have
\begin{equation*}
 \limsup_N \E R(\inner{  \theta,\theta'}) \leq R(0).
\end{equation*}
Recall that $0$ is the unique minimizer of $R$ on $[0, 1]$ which allows us to conclude
\begin{equation*}
\E |R(\inner{  \theta,\theta'}) - R(0)| = o(1)\,.
\end{equation*} and therefore by Markov's inequality for any $\ep > 0$, $$P( R(\inner{ \theta,\theta'}) > R(0) + \ep ) = o(1).$$As $R$ is strictly increasing we conclude that, for any $\epsilon>0$, $$P(\inner{ \theta,\theta'} >\epsilon)=o(1).$$  Since the integrand is bounded from above we conclude that
$$\limsup_N \E\inner{\theta,\theta'} \leq 0.$$Using now Lemma \ref{lem:nishimori}, we conclude
$$\liminf_N \mmse{N}{n} \geq 1,$$which concludes the proof.

\end{proof}

\section{Proof of Corollary \ref{cor:main}: Establishing the ``all" }

\begin{proof}
We apply Theorem \ref{thm:main}. We fix some $\epsilon>0$ and want to show that for if $n \geq (1+\epsilon)n^*$, \eqref{assum:no_sol} holds, which based on Proposition \ref{prop:postCounting}] implies the desired ``all" condition \eqref{mmse0}. We also assume without loss of generality that $n \leq C n^*$ for an absolute positive constant $C$, since the random variables $Z_{N, \delta}$ are decreasing in the stochastic order as functions of the samples size $n$.  By assumption $p$ is a fixed constant in $(0,1)$ independent of $N$.  Hence since $h(p)$ is a positive constant itself we conclude from the definition of $n^*$ that for all $n=\Theta(n^*)$ it also holds
\begin{align}\label{nlogM} 
n= \Theta(\log M).
\end{align} In particular, $n \rightarrow +\infty$ as $N \rightarrow +\infty$.

Consider $n_1 $ the number of samples where $Y_i=1$ and notice that $n_1$ is distributed as a Binomial distribution $\mathrm{Bin}(n,p).$ We condition on the event that $\mathcal{F}=\{|n_1-np| \leq \sqrt{n} \log \log n \}.$ Standard large deviation theory on the Binomial distribution yields  that since $p \in (0,1)$ and $n \rightarrow +\infty$, it holds $ \lim_N P(\mathcal{F})=1.$

Therefore, by Markov's inequality it suffices to prove that for every $\delta>0$,
\begin{align}
\lim_N \E[ Z_{N,\delta} 1(\mathcal{F})]=0.
\end{align} or equivalently by linearity of expectation and the independence of $Y_i,X_i, i=1,2,\ldots, n$ given $\theta$,

\begin{align*}
\lim_N \E \left\{ 1(\mathcal{F})\sum_{\theta': \|\theta'-\theta\|^2 \geq \delta} P\left( \bigcap_{i=1}^n \{Y_i=g(X_i,\theta')\} \bigg{|} n_1\right)\right\}=0.
\end{align*}Now fix any $\theta'$ with  $\|\theta'-\theta\|^2 \geq \delta$ or equivalently $\rho=\inner{\theta,\theta'}\leq 1-\frac{\delta}{2}$ and some $n_1$ satisfying $\mathcal{F}$.  Using the definitions of $R_i, i=0,1$ from Assumption \ref{assum:r_new} we have that $$P\left(\bigcap_{i=1}^n \{Y_i=g(X_i,\theta')\} \bigg{|}  n_1\right)$$equals
\begin{align}
&\binom{n}{n_1} P\left(g(X,\theta)=g(X,\theta')=1 \right)^{n_1}P\left(g(X,\theta)=g(X,\theta')=0 \right)^{n-n_1} \nonumber\\
=&\binom{n}{n_1} R_1({\rho})^{n_1}R_0({\rho})^{n-n_1} \nonumber\\\
=&\exp \left( n  h(\frac{n_1}{n})+n_1 \log R_1({\rho})+(n-n_1)\log R_0({\rho})+o(n) \right)
\end{align}where $h$ is defined in \eqref{bin_ent}, and  we used the standard application of Stirling's formula $\log \binom{n}{n x}=n h(x)+o(n)$ when $x$ is bounded away from $0$ and $1$. The last expression equals to
\begin{align}
&\exp \left( n  \left(h(\frac{n_1}{n})+\frac{n_1}{n} \log R_1({\rho})+(1-\frac{n_1}{n})\log R_0({\rho}) \right)+o(n) \right) \nonumber\\
=&\exp \left( n  \left(h(p)+p \log R_1({\rho}))+(1-p)\log R_0({\rho})\right)+o(n) \right) \label{F}\\
=&\exp \left( n  \left(p \log \frac{R_1({\rho})}{p}+(1-p)\log \frac{R_0({\rho})}{(1-p)} \right)+o(n) \right) \label{Final},
\end{align} and for \eqref{F} we used the continuity of $h$ and that $n_1/n=p\left(1+O(\log \log n/\sqrt{n})\right)=p(1+o(1)),$ since $n \rightarrow +\infty$.  Importantly, since $p \in (0,1)$ the $o(n)$ term in \eqref{Final} can be taken to hold uniformly over the specific choices of $n_1$ satisfying $\mathcal{F}$.

Using \eqref{Final} it suffices to establish for $G(\rho,p)=p \log \frac{R_1({\rho})}{p}+(1-p)\log \frac{R_0({\rho})}{(1-p)}$ that
\begin{align}\label{eq:goal1}
\lim_N \E_{ \theta } \sum_{ \rho \in \mathcal{R}: \rho \leq 1-\frac{\delta}{2}} |\{\theta' \in \Theta: \inner{\theta,\theta'}=\rho\}| e^{n G(\rho,p)+o(n)}=0,
\end{align}where $\mathcal{R}$ denotes the support of the overlap distribution of two independent samples from the prior $P_{\Theta}$. Now since the prior is uniform over $\Theta$ if $\rho$ is drawn from the law of the inner product between two independent samples from the prior, \eqref{eq:goal1} is equivalent with
\begin{align}
\lim_N \E_{\rho } 1( \rho \leq 1-\frac{\delta}{2})M e^{n G(\rho,p)+o(n)}=0.
\end{align}Now notice that since $n^*=(1+o(1))\frac{\log M}{h(p)}$ by Proposition \ref{lem:nstar} we have
\begin{align} 
M e^{n G(\rho,p)+o(n)}&=\exp\left(\log M+n^*G(\rho,p)+(n-n^*)G(\rho,p)+o\left(n \right) \right) \nonumber\\
&=\exp\left(n^* h(p)+n^*G(\rho,p)+\left(n-n^* \right)G(\rho,p)+o\left(n+\log M \right) \right) \nonumber\\
&=\exp \left( n^*\left(p \log \frac{R_1({\rho})}{p^2}+(1-p)\log \frac{R_0({\rho})}{(1-p)^2}\right)+\left(n-n^* \right)G(\rho,p)+o(\log M)\right), \label{eq:alg} 
\end{align}where we used that $n$ is of order $\log M$, by \eqref{nlogM}.

Now Assumption \ref{assum:r_new} implies that the functions $R_i, i=0,1$ are increasing in $[0,1]$ and Assumption \ref{assum:r_low_ov} that their sum is strictly increasing in $[0,1]$. Furthermore, notice that at full correlation it holds $R_1(1)=p, R_0(1)=1-p$. Hence we conclude that for some $\delta'>0$ the following holds;  for any $\rho \leq 1-\frac{\delta}{2},$  $$\min \{ \log \frac{R_1({\rho})}{p}, \log \frac{R_0({\rho})}{1-p} \} \leq \min \{\log \frac{R_{1}(1)}{p}, \log \frac{R_0({1})}{1-p}\} -\delta'=-\delta'$$ and $$\max \{ \log \frac{R_1({\rho})}{p}, \log \frac{R_0({\rho})}{1-p} \} \leq \max \{\log \frac{R_{1}(1)}{p}, \log \frac{R_0({1})}{1-p}\} =0.$$ Hence, since $p \in (0,1)$, from the definition of $G(\rho,p)$ we conclude that for $\delta''=\delta'\min \{p,1-p\}>0$ it holds that for all $\rho \leq 1-\frac{\delta}{2},$ 
$G(\rho,p) \leq -\delta''.$ Hence since $n \geq (1+\epsilon)n^*$ and $n^*=\Theta(\log M)$ we conclude that for all $\rho \leq 1-\frac{\delta}{2},$ 
\begin{align}\label{G_rho}
(n-n^*)G(\rho,p) \leq -\epsilon \delta'' n^*=-\Omega(\log M).
\end{align}Combining \eqref{eq:alg} with \eqref{G_rho}, and then using $n^*=(1+o(1))\frac{\log M}{h(p)}=\frac{\log M}{h(p)}+o(\log M),$ we conclude
\begin{align*}
&\E_{\rho } 1( \rho \leq 1-\frac{\delta}{2})M e^{n G(\rho,p)+o(n)}\\
& \leq e^{-\Omega(\log M)}  \E_{\rho } \exp \left( n^*(p \log \frac{R_1({\rho})}{p^2}+(1-p)\log \frac{R_0({\rho})}{(1-p)^2})\right) \\
& =e^{-\Omega(\log M)}  \E_{\rho } \exp \left( \frac{\log M}{h(p)}(p \log \frac{R_1({\rho})}{p^2}+(1-p)\log \frac{R_0({\rho})}{(1-p)^2})\right).
\end{align*}
Hence we are left with establishing

\begin{align}\label{eq:goal2}
 \limsup_N \frac{1}{\log M} \log \E_{\rho } \exp \left(  W(\rho,p) \log M \right) =0,
\end{align} for $W(\rho,p) \defeq \frac{1}{h(p)}(p \log \frac{R_1({\rho})}{p^2}+(1-p)\log \frac{R_0({\rho})}{(1-p)^2}).$

To prove it, let us fix a positive integer $k$. We have \begin{align*}
\E_{\rho } \exp \left(  W(\rho,p) \log M \right)& \leq \sum_{\ell = 0}^{k-1} P[\rho \geq \ell/k] \sup_{t \in [\ell/k, (\ell+1)/k)} \exp \left(  W(t,p) \log M \right) \\
& \leq k \cdot \max_{0 \leq \ell < k} \sup_{t \in [\ell/k, (\ell+1)/k]} \exp\left(W(t,p) \log M + \log  P[\rho \geq \ell/k] \right).
\end{align*}

Therefore by using the overlap rate function $r$,
\begin{align*}
 \limsup_N \frac{1}{\log M} \log \E_{\rho } \exp \left(  W(\rho,p) \log M \right) &\leq \max_{0 \leq \ell < k} \sup_{t \in [\ell/k, (\ell+1)/k]} \left(W(t,p)-r\left(\frac{\ell}{k}\right) \right)\\
& \leq   \sup_{t \in [0,1]} \left(W(t,p)-r(t)\right)+\sup_{t,t' \in [0,1]: |t-t'| \leq \frac{1}{k}} |r(t)-r(t')|. 
\end{align*} Sending $k \rightarrow +\infty$ using the uniform continuity of $r$ \textcolor{black}{(implied by e.g. the Heine-Cantor theorem)} we conclude
\begin{align*}
 \limsup_N \frac{1}{\log M} \log \E_{\rho } \exp \left(  W(\rho,p) \log M \right) & \leq   \sup_{t \in [0,1]} \left(W(t,p)-r(t)\right). 
\end{align*} The assumption \eqref{cond:main} completes the proof.

\end{proof}

\section{Applications: the Proofs}

In this section we present the proofs for the three families of models we establish the all-or-nothing phenomenon using our technique. The proof concept remains the same across the different models; we apply Corollary \ref{cor:main} and check that all assumptions apply.

\subsection{Proof of Theorem \ref{thm:BGT}}
\begin{proof}
We apply Corollary \ref{cor:main}. Notice that for any fixed $\theta \in \Theta$ the random variable $\sqrt{k}\inner{X_i,\theta}$ follows a Binomial distribution $\mathrm{Bin}(k,\frac{\nu}{k})$. Therefore 
\begin{align}
p=1-P(g(X,\theta)=0)=1-\left(1-\frac{\nu}{k}\right)^k=1-q.
\end{align}Hence $h(p)=h(q)$ and the critical sample size is indeed  $n^*=\floor*{\frac{\log \binom{N}{k}}{h(q)}},$  and Stirling's formula implies that since $k=o(N)$, $H(\theta)=\log \binom{N}{k}=(1+o(1))k \log \frac{N}{k}$ and therefore it also holds $n^*=(1+o(1))\frac{ k\log \frac{N}{k}}{h(q)}.$

We now check the assumptions of the Corollary.

\paragraph{Assumption \ref{assum:full_ov}} We start with Assumption \ref{assum:full_ov}, which concerns properties of the prior. We use~\citep[Lemma 6]{NilZad20} to conclude that the prior $P_{\Theta}$ admits the overlap rate function $r(t)=t, $ per Definition \ref{dfn:ov_rate}. Now, notice that the first part of Assumption \ref{assum:full_ov} is directly implied by the fact that $r(1+\delta)=1+\delta>1$ for any fixed $\delta>0$. For the second part notice that since the law of the prior is permutation-invariant with respect to the $N$ dimensions, for any fixed $\theta \in \Theta$ and $\theta'$ chosen from the prior, $ \inner{\theta,\theta'}$ is equal in distribution to the law of $ \inner{\theta,\theta'}$ where $\theta,\theta'$ are two independent samples from the prior. Hence using the overlap rate function $r(t)=t$ we have that for $M_N=\binom{N}{k}$ it holds that for any $\epsilon>0$, $$ P\left( \inner{\theta,\theta'}>\epsilon \right) \leq \exp \left(- \left(\epsilon +o(1) \right)\log M_N  \right)=o(1),$$as desired.

\paragraph{Assumptions  \ref{assum:r_low_ov}, \ref{assum:r_new}} For Assumption  \ref{assum:r_low_ov} and Assumption \ref{assum:r_new} we directly compute by elementary combinatorics the functions $R_i({\rho}), i=1,2$ and $R(\rho)$. Recall that $\{g(X,\theta')=1\}$ is the event that the supports of $\theta'$ and $X$ have a non-empty intersection.  Given a $N$-dimensional vector $v \in \mathbb{R}^n$, we denote its support by $S(v):=\{i \in [N]: v_i \not =0 \}$.
First, fix some $\rho \in [0,1]$ and we compute $R_{\rho}(1)$ by considering two arbitrary $\theta,\theta'$ which share $\rho k$ indices in their support. Notice that for the argument to be non-vacuous we assume also that $\rho=\ell/k$ for some $\ell \in \{0,1,2,\ldots, k\}.$ Conditioning on whether $S(X)$ intersects $S(\theta) \cap S(\theta')$, we have
\begin{align*}
R_{1}(\rho)&=P(g(X,\theta)=g(X,\theta')=1)\\
&=P(S(X) \cap S(\theta) \not = \emptyset, S(X) \cap S(\theta') \not = \emptyset)\\
&=\underbrace{1-\left(1-\frac{\nu}{k}\right)^{\ell}}_{ \text{ case } S(X)\cap S(\theta) \cap S(\theta') \not = \emptyset} +\underbrace{\left(1-\frac{\nu}{k}\right)^{\ell} \left(1-\left(1-\frac{\nu}{k}\right)^{k-\ell}\right)^2}_{ \text{ case } S(X)\cap S(\theta) \cap S(\theta')  = \emptyset}\\
&=1-2\left(1-\frac{\nu}{k}\right)^k+\left(1-\frac{\nu}{k}\right)^{2k-\ell}\\
&=1-2q+q^{2-\rho}.
\end{align*}Likewise,
\begin{align*}
R_{0}(\rho)&=P(g(X,\theta)=g(X,\theta')=0)\\
&=P(S(X) \cap S(\theta)  = S(X) \cap S(\theta') = \emptyset)\\
&=\left(1-\frac{\nu}{k}\right)^{2k-\ell}\\
&=q^{2-\rho}
\end{align*}We obtain
\begin{align*}
R(\rho)=1-2q+2q^{2-\rho}.
\end{align*}It can be straightforwardly checked that all three functions are strictly increasing and continuous in $[0,1]$.

\paragraph{ Condition \eqref{cond:main}} Finally, we need to check the condition \eqref{cond:main}. First notice that since $r(t)=t$ we need to show that for all $\rho \in [0,1]$,
\begin{align*}
\rho h(p) \geq  \left(p \log \frac{R_1({\rho})}{p^2}+(1-p) \log \frac{R_0({\rho})}{(1-p)^2} \right),
\end{align*} or using the definition of $h$,
\begin{align}\label{equiv_cond}
0 \geq p \log \frac{R_1({\rho})}{p^{2-\rho}}+(1-p) \log \frac{R_0({\rho})}{(1-p)^{2-\rho}}.
\end{align}Notice that for any $\rho$, $$\frac{R_0({\rho})}{(1-p)^{2-\rho}}=1.$$Therefore it suffices to show that for every $\rho \in [0,1]$,
\begin{align*}
R_1({\rho}) \leq p^{2-\rho}
\end{align*} or equivalently with respect to $q=1-p$,
\begin{align*}
1-2q+q^{2-\rho} \leq (1-q)^{2-\rho}.
\end{align*}
To prove the latter, recall that $q \leq \frac{1}{2}$ and consider the function $f(\rho)=(1-q)^{2-\rho}-q^{2-\rho}.$ Notice that $f(0)=f(1)=1-2q$ and therefore it suffices to prove that $f$ is concave in $[0,1]$. The second derivative of $f$ is \begin{align*}
f''(\rho)&=\log(1-q)^2 (1-q)^{2-\rho}-(\log q)^2 q^{2-\rho}\\
&=(1-q)^{2-\rho} \left(\log(1-q)^2 -(\log q)^2 (\frac{q}{1-q})^{2-\rho}\right)\\
& \leq (1-q)^{2-\rho} \left(\log(1-q)^2 -(\log q)^2 (\frac{q}{1-q})^2\right),
\end{align*}since $q \leq \frac{1}{2}$. Hence, it suffices to show $\log(1-q)^2  \leq (\log q)^2 (\frac{q}{1-q})^2$ or $(1-q) \log(1-q)  \geq q \log q.$ To prove the latter consider the function $g(q)=(1-q) \log(1-q) -q \log q, q \in (0,\frac{1}{2}].$ Notice that $g(0^+)=g(1/2)=0$, and that for each $q \in (0,\frac{1}{2})$ $g''(q)=\frac{2q-1}{q(1-q)} <0.$ Hence, $g(q)$ is concave on the interval $[0, 1/2]$, and $g(q) \geq \min\{g(0^+), g(1/2)\}=0$ as we wanted. The proof is complete.

\end{proof}

\subsection{Proof of Theorem \ref{thm:SBG}  }

\begin{proof}
We apply Corollary \ref{cor:main}. Notice that since any fixed $\theta \in \Theta$ lies on the unit sphere in $\RR^N$ and $X_i \sim N(0,I_N)$, it holds that $\inner{X_i,\theta} \sim N(0,1)$. Hence
$$p=P(g(X_i,\theta)=1)=P( \inner{X_i,\theta} \in A)=\frac{1}{2}.$$ Hence indeed the critical sample size is $n^*=\floor{\frac{ \log \binom{N}{k}}{h(\frac{1}{2})}}=(1+o(1))\floor{k \log _2\frac{N}{k}},$ where we have Stirling's formula and the assumption that $k=o(N)$.

We now check the assumptions of the Corollary.

\paragraph{Assumption \ref{assum:full_ov}} Assumption \ref{assum:full_ov} concerns properties of the prior $P_{\Theta}$ and they are already established in the corresponding part of Theorem \ref{thm:BGT}, since the prior is identical.

\paragraph{Assumptions  \ref{assum:r_low_ov}, \ref{assum:r_new}} For Assumption  \ref{assum:r_low_ov} and Assumption \ref{assum:r_new}, we study the functions $R_i(\rho), i=0,1$ and $R(\rho)$.

Now we compute the functions. Recall that $\{g(X,\theta)=1\}=\{ \inner{X_i,\theta} \in A\}$ and that for any $\theta,\theta' \in \Theta$ with $\inner{\theta,\theta'}=\rho$ the pair $\inner{X_i,\theta},\inner{X_i,\theta'}$ is a bivariate pair of standard Gaussians with correlation $\rho$. Letting $(Z, Z_\rho)$ be such a pair, we therefore have
\begin{align*}
R_{1}(\rho)&=P(g(X,\theta)=g(X,\theta')=1)=P(Z \in A, Z_{\rho} \in A),\\
R_{0}(\rho)&=P(g(X,\theta)=g(X,\theta')=0)=P(Z \not \in A, Z_{\rho} \not \in A),\\
R(\rho)&=P(g(X,\theta)=g(X,\theta'))=P(Z \in A, Z_{\rho} \in A)+P(Z \not \in A, Z_{\rho} \not \in A).
\end{align*} Furthermore, because $A$ is balanced we have for any $\rho \in [0,1]$,
\begin{align*}
R(\rho)& =P(\{Z \in A, Z_\rho \in A\} \cup \{Z \notin A, Z_\rho \notin A\}) \\
& = P(Z \in A, Z_\rho \in A) + P(Z \notin A, Z_\rho \notin A) \\
& = P(Z \in A, Z_\rho \in A) + (1 - P(Z \in A) - P(Z_\rho \in A) + P(Z \in A, Z_\rho \in A)) \\
& = 2 P(Z \in A, Z_\rho \in A)\\
&=2 R_{1}(\rho)\\
&=2R_{0}(\rho).
\end{align*}
The uniform limits are all strictly increasing with respect to $\rho \in [0,1]$ and continuous at $0+,$ by Lemma~\ref{lem:gaussian_increasing} applied to $A$ and $A^C$.

\paragraph{ Condition \eqref{cond:main}} Finally, we need to check the condition \eqref{cond:main}. First notice that similar to Theorem \ref{thm:BGT} the prior admits the overlap rate function $r(t)=t$ and therefore the condition is equivalent with \eqref{equiv_cond}. Notice that \eqref{equiv_cond} simplifies since $p=1/2$ in our case to
\begin{align*}
R_{1}(\rho) \leq 2^{\rho-2},
\end{align*} or
\begin{align}\label{equiv_cond2}
P(Z \in A, Z_\rho \in A)  \leq 2^{\rho-2}.
\end{align}
By Borell's noise stability theorem~\citep{Bor85}, since $P(Z \in A) = 1/2 = P(Z \geq 0)$, we have
\begin{equation*}
P(Z \in A, Z_\rho \in A)  \leq  P(Z \geq 0, Z_\rho \geq 0) = \frac{1}{4} \left(1+\frac{2}{\pi} \arcsin \rho\right)\,,
\end{equation*}
where the equality is by Sheppard's formula~\citep{She99}.

Hence it suffices to show that for all $\rho \in [0,1]$ it holds $2^{\rho} \geq 1+\frac{2}{\pi} \arcsin(\rho).$
We consider the function $g(\rho)=2^{\rho}-1-\frac{2}{\pi} \arcsin(\rho), \rho \in [0,1]$. It suffices to show $g(\rho) \geq 0$ for all $\rho \in [0,1]$.

Now notice $g(0)=g(1)=0$ and $g(\frac{1}{2})=\sqrt{2}-4/3>0.$ We claim that there is no root of $g$ in $(0,1)$ which implies the result by Bolzano's theorem. Arguing by contradiction, if there was a root then the derivative $$g'(\rho)=2^{\rho} \ln 2-\frac{2}{\pi} \frac{1}{\sqrt{1-\rho^2}}$$ would have two roots in $(0,1)$ by Rolle's theorem. Rearranging, this is equivalent to the equation $$\rho \ln 2 +\frac{1}{2} \ln (1-\rho^2)=\ln (\frac{2}{\pi \ln 2})$$ having two roots in $(0,1).$
But the function on the left side is concave and is zero for $\rho = 0$, so it takes each negative value at most once.
Since $\ln (\frac{2}{\pi \ln 2}) <0$ we are done.

\end{proof}

\section{Remaining proofs}\label{sec:remain}

 \begin{proof}[Proof of Proposition \ref{prop:postCounting}]

The equality \eqref{eq:posterior} follows in a straightforward manner from the observation that the posterior of $\theta$ given $Y^n,X^n$ is the uniform measure over the solutions $\theta'$ of equations \eqref{eq:observ2}, and the definition of $Z_{N,\delta}.$

For \eqref{eq:posteriorExp} notice that by Cauchy-Schwarz inequality that if $\theta'$ is drawn from the posterior $P_{\theta|Y^n,X^n}$,
\begin{align*}
\mmse{N}{n} &=\E \|\theta-\E[\theta|Y^n,X^n]\|^2 \\
&= \frac 12  \E \|\theta-\theta'\|^2\\
&= \frac 12 \int_{\delta=0}^2 P(\|\theta-\theta'\|^2 \geq \delta)\\
&= \frac 12 \int_{\delta=0}^2 \E P(\|\theta-\theta'\|^2 \geq \delta|Y^n,X^n)\\
&= \frac 12 \E \int_{\delta=0}^2  P(\|\theta-\theta'\|^2 \geq \delta|Y^n,X^n),
\end{align*}
where in the second line we have used Lemma~\ref{lem:nishimori} and where in the last line we are allowed to exchange the order of integration by Tonelli's theorem as all integrands are non-negative. Notice that finally \eqref{eq:posterior} allows us to conclude \eqref{eq:posteriorExp}.

For the second part, fix some arbitrary $\epsilon \in (0,2]$ and set $A=\{Z_{N,\epsilon}/Z_{N,0}>\epsilon \}$. From \eqref{assum:no_sol} we have $P(A)=o(1)$. Notice that for any $\epsilon'$ with $2 \geq \epsilon' \geq \epsilon$, it holds almost surely $Z_{N,\epsilon'}1(A^c)/Z_{N,0} \leq \epsilon$. Hence, we have using \eqref{eq:posteriorExp},
\begin{align*}
\mmse{N}{n} &\leq \E \int_{\delta=0}^{2}\frac{Z_{N,\delta}}{Z_{N,0}} d\delta\\
& =\E \int_{\delta=0}^{2}\frac{Z_{N,\delta}}{Z_{N,0}} 1(A^c) d\delta+ \E \int_{\delta=0}^{2}\frac{Z_{N,\delta}}{Z_{N,0}} 1(A) d\delta \\
& \leq  \E \int_{\delta=0}^{\epsilon }\frac{Z_{N,\delta}}{Z_{N,0}} d\delta+2\epsilon+P(A)\\
& \leq 3\epsilon+o(1).
\end{align*}Therefore,
\begin{align*}
\limsup_N \mmse{N}{n}  \leq 3\epsilon.
\end{align*} As $\epsilon \in (0,2]$ was arbitrary we conclude \eqref{mmse0}.

The other direction follows in a straightforward manner since for any fixed $\epsilon>0$, 
\begin{align*}
\mmse{N}{n}  \geq  \E \int_{\delta=0}^{\epsilon }\frac{Z_{N,\delta}}{Z_{N,0}} d\delta \geq \epsilon \E \frac{Z_{N,\epsilon}}{Z_{N,0}}.
\end{align*} 
\end{proof}

 \begin{proof}[Proof of Proposition \ref{prop:lb}]
Using \eqref{eq:key_id2} from Proposition \ref{prop:key_identity} we have that since the KL divergence is non-negative,
\begin{align}\label{ineq:KL_pos}
1-\frac{H(\theta|Y^n,X^n)}{H(\theta)} \leq (1+o(1))\frac{n}{n^*}
\end{align}Using now Proposition \ref{prop:ent_post} we conclude that \eqref{eq:all_fake} holds. Combining \eqref{eq:all_fake} with \eqref{ineq:KL_pos} concludes the result.
\end{proof}

$$f(X)$$

$$D_{\infty}(p,q)=\max_S \log p(S)/q(S).$$

\begin{lemma}\label{lem:gaussian_increasing}
Let $Z$ and $Z_\rho$ be a bivariate pair of standard Gaussians with correlation $\rho$.
Then for any Borel set $A \subseteq \RR$ such that $P(Z \in A) \in (0, 1)$, the function
\begin{equation*}
\rho \mapsto P(Z \in A, Z_\rho \in A)
\end{equation*}
is strictly increasing on $[0, 1]$ and continuous on $[0,1)$.
\end{lemma}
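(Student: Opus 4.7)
The plan is to use the Hermite expansion of $f = \mathbf{1}_A$ together with Mehler's formula, which gives a clean power-series representation of the target quantity in $\rho$. Let $\{H_k\}_{k \geq 0}$ denote the probabilists' Hermite polynomials, an orthogonal basis of $L^2(\gamma)$ (where $\gamma$ is the standard Gaussian measure) with $\E[H_k(Z)^2] = k!$. Since $P(Z \in A) \leq 1 < \infty$, we have $f \in L^2(\gamma)$, so we may write $f = \sum_{k \geq 0} \tfrac{\hat f_k}{k!} H_k$ with coefficients $\hat f_k = \E[f(Z) H_k(Z)]$.

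Next, I would invoke Mehler's formula for the bivariate standard Gaussian density with correlation $\rho$,
\[
\phi_\rho(x,y) = \phi(x)\phi(y) \sum_{k=0}^\infty \frac{\rho^k}{k!} H_k(x) H_k(y),
\]
to integrate against $f(x)f(y)$ and obtain the representation
\[
P(Z \in A, Z_\rho \in A) = \E[f(Z) f(Z_\rho)] = \sum_{k=0}^\infty \frac{\hat f_k^2}{k!}\, \rho^k.
\]
By Parseval's identity, the non-negative coefficients sum to $\sum_k \hat f_k^2/k! = \E[f(Z)^2] = P(Z \in A) < \infty$, so the power series converges absolutely for $|\rho| \leq 1$, and is continuous on $[0,1)$ as a power series within its radius of convergence (in fact up to $\rho=1$ by Abel).

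Strict monotonicity is the main point. Since each coefficient $\hat f_k^2/k!$ is non-negative, for $0 \leq \rho_1 < \rho_2 \leq 1$ we have
\[
P(Z \in A, Z_{\rho_2} \in A) - P(Z \in A, Z_{\rho_1} \in A) = \sum_{k \geq 1} \frac{\hat f_k^2}{k!}\bigl(\rho_2^k - \rho_1^k\bigr) \geq 0,
\]
with strict inequality provided some $\hat f_k \neq 0$ with $k \geq 1$. Suppose by contradiction that $\hat f_k = 0$ for all $k \geq 1$. Then $f$ is $\gamma$-a.s.\ equal to the constant $\hat f_0 = P(Z \in A)$. But $f = \mathbf{1}_A$ is $\{0,1\}$-valued and $P(Z \in A) \in (0,1)$, so $f$ cannot be a.s.\ constant, a contradiction.

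I expect no substantial obstacle; the only non-trivial ingredient is Mehler's expansion, which is classical. As an alternative route one could set $\rho = e^{-t}$ and rewrite $P(Z \in A, Z_\rho \in A) = \langle f, P_t f\rangle_{L^2(\gamma)} = \|P_{t/2} f\|_{L^2(\gamma)}^2$ for the Ornstein--Uhlenbeck semigroup $P_t$, and conclude by the spectral decomposition of $P_t$ on Wiener chaoses; this leads to the same series and the same argument for strict monotonicity.
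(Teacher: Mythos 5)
Your proof is correct and follows essentially the same route as the paper: the paper invokes the orthonormal Hermite basis of $L_2(\gamma)$ (citing O'Donnell) to obtain the identity $\E[f(Z)f(Z_\rho)]=\sum_{k\geq 0}\rho^k\hat f_k^2$, whereas you derive the same expansion explicitly via Mehler's formula with the $k!$ normalization, then argue monotonicity and continuity from the power-series representation in the same way (nonnegative coefficients, plus non-constancy of $f=\mathbf{1}_A$ forcing some $k\geq 1$ coefficient to be nonzero).
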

\begin{proof}
Write $\gamma$ for the standard Gaussian measure on $\RR$.
We recall~\cite[see, e.g.][Proposition 11.37]{ODo14} that there exists an orthonormal basis $\{h_k\}_{k \geq 0}$ for $L_2(\gamma)$ such that, for any $f \in L_2(\gamma)$,
\begin{equation*}
\E[f(Z) f(Z_\rho)] = \sum_{k \geq 0}  \rho^k \hat f_k^2\,,
\end{equation*}
where the coefficients $\{\hat f_k\}_{k \geq 0}$ are defined by
\begin{equation*}
f = \sum_{k \geq 0} \hat f_k h_k \quad \text{in $L_2(\gamma)$.}
\end{equation*} 
Moreover, $h_0 = 1$, so that if $f$ is not $\gamma$-a.s.~constant, there exists a $k > 0$ for which $\hat f_k \neq 0$.
We obtain that, for any non-constant $f$, the function $\E[f(Z) f(Z_\rho)] = \sum_{k \geq 0}  \rho^k \hat f_k^2$ is strictly increasing on $[0, 1]$. Furthermore, by Parseval's identity $\sum_{k \geq 0}\hat f_k^2 =\E[f^2(Z)] <+\infty.$ Hence then function  $\E[f(Z) f(Z_\rho)] = \sum_{k \geq 0}  \rho^k \hat f_k^2$ is also continuous on $[0,1).$

Applying this result to the non-constant function $f(x) = 1(x \in A) \in L_2(\gamma)$ yields the claim.
\end{proof}
\end{document}